\newtheorem{theorem}{Theorem}[section]
\newtheorem{lemma}[theorem]{Lemma}
\newtheorem{proposition}[theorem]{Proposition}
\newtheorem{predefinition}[theorem]{Definition}
\newenvironment{definition}{\begin{predefinition}\rm}{\end{predefinition}}
\newtheorem{preremark}[theorem]{Remark}
\newenvironment{remark}{\begin{preremark}\rm}{\end{preremark}}
\newtheorem{prenotation}[theorem]{Notation}
\newtheorem{preexample}[theorem]{Example}
\newenvironment{example}{\begin{preexample}\rm}{\end{preexample}}
\newtheorem{preclaim}[theorem]{Claim}
\newtheorem{prequestion}[theorem]{Question}
\newenvironment{question}{\begin{prequestion}\rm}{\end{prequestion}}
\newtheorem{preapplication}[theorem]{Application}
\newenvironment{application}{\begin{preapplication}\rm}{\end{preapplication}}
\numberwithin{equation}{section}
  \newcommand{\textcyr}[1]{{\fontencoding{OT2}\fontfamily{wncyr}\fontseries{m}\fontshape{n}
     \selectfont #1}}
\newcommand{\Sha}{{\mbox{\textcyr{Sh}}}}
\newcommand \ZZ {{\mathbb Z}}
\newcommand \NN {{\mathbb N}}
\newcommand  \FF {{\mathbb F}}
\newcommand \GG {{\mathbb G}}
\newcommand \EE {{\mathbb E}}
\newcommand \dieu {{\mathbb D}}
\newcommand{\til}[1]{{\widetilde{#1}}}
\newcommand{\ang}[1]{\langle #1 \rangle}
\newcommand{\st}[1]{\left\{#1\right\}}
\newcommand{\rest}[1]{|_{#1}}
\def\aalpha{\boldsymbol{\alpha}}
\def\mmu{\boldsymbol{\mu}}
\global\let\ker\undefined
\DeclareMathOperator{\ker}{Ker}
\DeclareMathOperator{\frob}{Fr}
\DeclareMathOperator{\ver}{Ver}
\DeclareMathOperator{\im}{Im}
\DeclareMathOperator{\End}{End}
\DeclareMathOperator{\bt}{BT}
\newenvironment{alphabetize}{\begin{enumerate}

}{\end{enumerate}}
\begin{document}

\title[Superspecial rank]{Superspecial rank of supersingular abelian varieties and Jacobians}


\author{\sc Jeffrey D. Achter}
\address{Jeffrey D. Achter\\
Colorado State University\\
Fort Collins, CO, 80521\\
}
\email{achter@math.colostate.edu}
\urladdr{http://www.math.colostate.edu/~achter/}

\author{\sc Rachel Pries}
\address{Rachel Pries\\
Colorado State University\\
Fort Collins, CO, 80521\\
}
\email{pries@math.colostate.edu}
\urladdr{http://www.math.colostate.edu/~pries/}

\subjclass[2000]{11G10, 11G20, 14F40, 14H40, 14L15}

\maketitle
\begin{abstract}
An abelian variety defined over an algebraically closed field $k$ of positive characteristic is supersingular if it is isogenous to a product of supersingular elliptic curves and is 
superspecial if it is isomorphic to a product of supersingular elliptic curves.
In this paper, the superspecial condition is generalized by defining the {\it superspecial rank} of an abelian variety, which is an invariant of its $p$-torsion.
The main results in this paper are about the superspecial rank of supersingular abelian varieties and Jacobians of curves.
For example, it turns out that the superspecial rank determines information about the decomposition of a supersingular abelian variety 
up to isomorphism; 
namely it is a bound for the maximal number of supersingular elliptic curves appearing in such a decomposition.
\end{abstract}

\maketitle

\section{Introduction}

If $A$ is a principally polarized abelian variety of dimension $g$ defined over an algebraically closed field $k$ of positive characteristic $p$, 
then the multiplication-by-$p$ morphism $[p]=\ver \circ \frob$ is inseparable.  
Typically, $A$ is {\it ordinary} in that the Verschiebung morphism $\ver$ is separable,
a condition equivalent to the number of $p$-torsion points of $A$ being $p^g$,
or the only slopes of the $p$-divisible group of $A$ being $0$ and $1$,
or the $p$-torsion group scheme of $A$ being isomorphic to $(\ZZ/p \oplus \mmu_p)^g$.

Yet the abelian varieties which capture great interest are those which are as far from being ordinary as possible. 
In dimension $g=1$, an elliptic curve is {\it supersingular} if it has no points of order $p$;
if the only slope of its $p$-divisible group is $1/2$; or if its $p$-torsion group scheme is isomorphic to
the unique local-local ${\rm BT}_1$ group scheme of rank $p^2$, which we denote by $I_{1,1}$. 

These characterizations are different for a principally polarized abelian variety $A$ of higher dimension $g$.
One says that $A$ has {\it $p$-rank $0$} when $A$ has no points of order $p$; 
that $A$ is {\it supersingular} when the only slope of its $p$-divisible group is $1/2$;
and that $A$ is {\it superspecial} when its $p$-torsion group scheme is isomorphic to $I_{1,1}^g$.
If $A$ is supersingular, then it has $p$-rank $0$, but the converse is false for $g \geq 3$.
If $A$ is superspecial, then it is supersingular, but the converse is false for $g \geq 2$.

The Newton polygon and Ekedahl-Oort type of an abelian variety usually do not determine the decomposition of the abelian variety.
In fact, for any prime $p$ and formal isogeny type $\eta$ other than the supersingular one, there exists an absolutely simple
abelian variety over $k$ having Newton polygon $\eta$ \cite{lenstraoort}.  On the other hand, consider the following results about
supersingular and superspecial abelian varieties.

\begin{theorem} (Oort)
Let $A/k$ be a principally polarized abelian variety.
\begin{enumerate}
\item Then $A$ is supersingular if and only if it is isogenous to a product of supersingular elliptic curves 
by \cite[Theorem 4.2]{O:sub} (which uses \cite[Theorem 2d]{tate:endo}).
\item Then $A$ is superspecial if and only if it is isomorphic to a product of supersingular elliptic curves  
\cite[Theorem 2]{oort75}, see also \cite[Theorem 4.1]{Nygaard}.
\end{enumerate}
\end{theorem}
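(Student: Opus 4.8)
\smallskip
\noindent\emph{Proof proposal.}
The plan is to treat the two biconditionals separately; in each, one implication is a formal consequence of the invariance under isogeny (respectively under isomorphism) of a $p$-adic invariant, and the converse carries the content. The easy directions: if $A$ is isogenous to $\prod_i E_i$ with the $E_i$ supersingular, then $A$ has the same Newton polygon as $\prod_i E_i$, namely the isoclinic polygon of slope $1/2$, so $A$ is supersingular; and if $A\cong\prod_{i=1}^g E_i$ with the $E_i$ supersingular, then $A[p]\cong\prod_i E_i[p]\cong I_{1,1}^g$, so $A$ is superspecial.

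For the converse in (1), suppose $A$ is supersingular. By the Dieudonn\'e--Manin classification $A[p^\infty]$ is isoclinic of slope $1/2$, hence isogenous to $G_{1,1}^g$, where $G_{1,1}=E[p^\infty]$ for a supersingular elliptic curve $E$ (which, having $j$-invariant in $\FF_{p^2}$, is defined over $\overline{\FF}_p\subseteq k$). The task is to lift this quasi-isogeny of $p$-divisible groups to an isogeny $E^g\to A$ of abelian varieties. After spreading $A$ out and passing to special fibres this reduces to the case that $A$ is defined over a finite field, where it follows from Honda--Tate theory: by Tate's theorem \cite[Theorem 2d]{tate:endo} the natural map $\hom(E^g,A)\otimes\ZZ_p\to\hom(E^g[p^\infty],A[p^\infty])$ hits the Frobenius-invariants, so some genuine homomorphism $E^g\to A$ induces the quasi-isogeny above and is therefore itself an isogeny; equivalently, each simple factor of $A$ has supersingular Weil number $\sqrt q\,\zeta$ and becomes isogenous over $\overline{\FF}_p$ to a power of $E$. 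As all supersingular elliptic curves over $\overline{\FF}_p$ are isogenous, $A\sim E^g$.

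For the converse in (2), suppose $A$ is superspecial, and fix a supersingular elliptic curve $E/k$ with $\mathcal{O}=\End(E)$, a maximal order in the quaternion algebra $B_{p,\infty}$ over $\QQ$. The first step is to upgrade the superspecial hypothesis --- which \emph{a priori} only constrains $A[p]$ --- into information about $A[p^\infty]$ itself. Indeed $A[p]\cong I_{1,1}^g$ is equivalent to the $a$-number $a(A)=\dim_k\hom(\alpha_p,A)$ attaining its maximum value $g$, which on the Dieudonn\'e module $M=\dieu(A[p^\infty])$ reads $FM=VM$; from this relation one deduces that the superspecial $p$-divisible group of height $2g$ and dimension $g$ is unique up to isomorphism, so $A[p^\infty]\cong E[p^\infty]^g$ (one can also obtain this via the crystalline-cohomology argument of \cite[Theorem 4.1]{Nygaard}). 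Now combine with (1): the right $\mathcal{O}$-module $L=\hom(E,A)$ is finitely generated, torsion free, of rank $g$ (so projective, as $\mathcal{O}$ is hereditary), and Serre's tensor construction produces an evaluation morphism $L\otimes_{\mathcal{O}}E\to A$, which is an isogeny since $A\sim E^g$. That this isogeny has degree one is checked on torsion: on the prime-to-$p$ part it follows over $\overline{k}$ from Tate's theorem on homomorphisms of abelian varieties over finite fields (after reducing to that case), while on the $p$-divisible group it follows from $A[p^\infty]\cong E[p^\infty]^g$ --- and it is here, and only here, that superspecial (rather than merely supersingular) is used. Once $L\otimes_{\mathcal{O}}E\xrightarrow{\ \sim\ }A$, writing the projective $\mathcal{O}$-module $L$ as a direct sum of right ideals $L\cong\bigoplus_i I_i$ yields $A\cong\prod_i(I_i\otimes_{\mathcal{O}}E)$, each factor $I_i\otimes_{\mathcal{O}}E$ being again a supersingular elliptic curve isogenous to $E$. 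Carrying along the principal polarization of $A$ equips $L$ with a positive-definite unimodular Hermitian form over $\mathcal{O}$, whose structure governs how the polarization distributes over the elliptic-curve factors.

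The step I expect to be the main obstacle is exactly the improvement of the evaluation isogeny $L\otimes_{\mathcal{O}}E\to A$ to an isomorphism, i.e.\ the passage from ``$A$ is isogenous to $E^g$'' to ``$A$ is a product of supersingular elliptic curves''. Everything before that controls $A$ only within its isogeny class, and it is precisely the superspecial condition --- equivalently $a(A)=g$, equivalently $FM=VM$ --- that forces the $p$-part of the isogeny to be trivial; since for $g\geq2$ there exist supersingular principally polarized abelian varieties that are not products of elliptic curves, no weaker hypothesis can suffice. By contrast, part (1) is comparatively soft once Tate's theorem is in hand, and the two ``easy'' implications are immediate from the definitions.
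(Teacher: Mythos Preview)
The paper does not supply its own proof of this theorem; it is quoted as background, with the arguments delegated entirely to the cited references \cite{O:sub}, \cite{tate:endo}, \cite{oort75}, \cite{Nygaard}, and the exposition moves on immediately. There is therefore no in-paper proof against which to compare your proposal.

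Your sketch is broadly correct and tracks the standard arguments in the literature, but two steps deserve tightening. In part~(1), ``spreading out and passing to special fibres'' does produce a supersingular specialization over a finite field, where Tate applies, but an isogeny found at a closed point does not automatically lift back to $k$; as written this is a gap. The cleaner route (and the one closer to Oort's) is first to quotient $A$ by the kernel of an isogeny $A[p^\infty]\to \til I_{1,1}^{\,g}$ of $p$-divisible groups, landing on a superspecial $B$; the superspecial Ekedahl--Oort stratum is zero-dimensional, so $B$ is already defined over $\overline{\FF}_p$, and now Tate's theorem over finite fields gives $B\sim E^g$ directly, hence $A\sim E^g$. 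In part~(2), your Serre-tensor argument via $L=\hom(E,A)$ is a valid modern packaging of the result, but your appeal to Tate for the prime-to-$p$ part of the degree again presupposes that $A$ descends to $\overline{\FF}_p$; this is true for superspecial $A$ (same zero-dimensionality argument), and you should say so rather than invoke a generic specialization. Oort's original argument is more hands-on with the Dieudonn\'e lattice and uses Deligne's observation that, for $g\ge 2$, any product of $g$ supersingular elliptic curves over $k$ is isomorphic to $E^g$; your approach and his are equivalent once the lattice $L$ is identified, but yours makes the role of the hereditary order $\mathcal{O}$ more visible.
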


The motivation for this paper was to find ways to measure the extent to which 
supersingular non-superspecial abelian varieties
decompose up to isomorphism.  
The $a$-number $a:={\rm dim}_k {\rm Hom}(\aalpha_p, A[p])$ gives some information about this; if $A$ has $p$-rank $0$, then
the number of factors in the decomposition of $A$ up to isomorphism is bounded above by the $a$-number, see \cite[Lemma 5.2]{SummerA}.
However, a supersingular abelian variety with large $a$-number could still be indecomposable up to isomorphism.

This paper is about another invariant of $A$, the {\it superspecial rank}, 
which we define in Section \ref{Sssdef} as the number of (polarized) factors of $I_{1,1}$ appearing in the $p$-torsion group scheme of $A$. 
In Proposition \ref{Pexists}, we determine which superspecial ranks occur for supersingular abelian varieties.
The superspecial rank of Jacobians also has an application involving Selmer groups, see Section \ref{Sselmer}.

In Section \ref{Sdecompose}, we define another invariant of $A$, the {\it elliptic rank}, 
which is the maximum number of elliptic curves appearing in a decomposition of $A$ up to isomorphism. 
In Proposition \ref{Pssrank=ssE}, we prove an observation of Oort which states that, for a supersingular abelian variety $A$, 
the elliptic rank equals the number of rank 2 factors in the $p$-divisible group $A[p^\infty]$. 
Proposition \ref{Psselliptic} states that the elliptic rank is bounded by the superspecial rank for an abelian
variety of $p$-rank $0$.
As a result, for an abelian variety $A$ of $p$-rank zero, the superspecial rank gives an upper bound for the maximal number of dimension one factors in a decomposition of $A$ up to isomorphism; this upper bound is most 
interesting for supersingular abelian varieties, which decompose completely up to isogeny.
 
In Section \ref{Sjacobian}, we apply this observation to prove some results about the superspecial rank and elliptic rank of Jacobians of curves.
For example, in characteristic $2$, Application \ref{App1general}
states that the superspecial rank of the Jacobian of any hyperelliptic curve of $2$-rank $r$ 
is bounded by $1 + r$, while its elliptic rank is bounded by $1+2r$.
The superspecial ranks of all the Hermitian curves are computed in Section \ref{Sherm}; 
in particular, when $n$ is even the elliptic rank of the Hermitian curve $X_{p^n}$ is zero.

The authors thank the organizers of the 2013 Journ\'ees Arithm\'etiques, the referee for valuable comments, 
Ritzenthaler for help with the French abstract, and 
Oort for sharing the idea for Proposition \ref{Pssrank=ssE} and more generally for being a source of inspiration for this work. The first-named author was partially supported by grants from the Simons Foundation (204164) and the NSA (H98230-14-1-0161 and H98230-15-1-0247).  The second-named author was partially supported by NSF grants DMS-11-01712 and DMS-15-02227.

\section{Notation}

All geometric objects in this paper are defined over an algebraically closed field $k$ of characteristic $p>0$.  
Some objects are defined over the ring $W(k)$ of Witt vectors over $k$.
Let $\sigma$ denote the Frobenius automorphism of $k$ and its lift to $W(k)$.
Let $A$ be a principally polarized abelian variety of dimension $g$ over $k$.
Here are some relevant facts about $p$-divisible groups and $p$-torsion group schemes.

\subsection{The $p$-divisible group}

By the Dieudonn\'e-Manin classification \cite{maninthesis}, there is an isogeny of $p$-divisible groups 
\[A[p^\infty] \sim \oplus_{\lambda=\frac{d}{c+d}} \til G_{c,d}^{m_\lambda},\] where $(c,d)$ ranges over pairs of relatively prime nonnegative integers, and $\til G_{c,d}$ 
denotes a $p$-divisible group of codimension $c$, dimension $d$,  and thus height $c+d$.
The Dieudonn\'e module $\til D_\lambda :=  \dieu_*(\til G_{c,d})$ (see \ref{subsecdefcartier} below) is a free $W(k)$-module of rank $c+d$.
Over $\operatorname{Frac}W(k)$, there is a basis $x_1, \ldots, x_{c+d}$ for $\til D_\lambda$ such that $F^{d}x_i=p^c x_i$.
The Newton polygon of $A$ is the data of the numbers $m_\lambda$; it admits an intepretation as the $p$-adic Newton polygon of the operator $F$ on $\dieu_*(A[p^\infty])$.

The abelian variety $A$ is {\it supersingular} if and only if $\lambda=\frac{1}{2}$ is the only slope of its $p$-divisible group $A[p^\infty]$.
Letting $\til{I}_{1,1}=\til G_{1,1}$ denote the $p$-divisible group of dimension $1$ and height $2$,
one sees that $A$ is supersingular if and only $A[p^\infty] \sim \til{I}_{1,1}^g$.

\subsection{The $p$-torsion group scheme}

The multiplication-by-$p$ morphism $[p]:A \to A$ is a finite flat morphism of degree $p^{2g}$.
The {\it $p$-torsion group scheme} of $A$ is 
\[A[p]=
\ker[p] = \ker(\ver\circ \frob),
\]
where $\frob:A \to A^{(p)}$ denotes the relative Frobenius morphism
and $\ver: A^{(p)} \to A$ is the Verschiebung morphism.  In fact, $A[p]$ is
a $\bt_1$ group scheme as defined in \cite[2.1, Definition 9.2]{O:strat}; it is killed by $[p]$, with
$\ker(\frob) = \im(\ver)$ and $\ker(\ver) = \im(\frob)$.

The principal polarization on $A$ induces a principal quasipolarization on $A[p]$,
i.e., an anti-symmetric isomorphism $\psi:A[p] \to A[p]^D$.  (This
definition must be modified slightly if $p=2$.)
Summarizing, $A[p]$ is a principally quasipolarized (pqp) $\bt_1$ group scheme of
rank $p^{2g}$.

Isomorphisms classes of pqp $\bt_1$ group schemes over $k$ (also known as
Ekedahl-Oort types) have been completely classified \cite[Theorem 9.4
\& 12.3]{O:strat}, building on
unpublished work of Kraft \cite{kraft} (which did not include polarizations) and of Moonen
\cite{M:group} (for $p \geq 3$). 
(When $p=2$, there are complications with the polarization which are resolved in \cite[9.2, 9.5, 12.2]{O:strat}.)

\subsection{Covariant Dieudonn\'e modules}
\label{subsecdefcartier}

The $p$-divisible group $A[p^\infty]$ and the $p$-torsion group scheme $A[p]$ can be described using covariant Dieudonn\'e theory; see e.g., \cite[15.3]{O:strat}.  Briefly, let $\til \EE = \til\EE(k) = W(k)[F,V]$ denote the non-commutative ring generated by semilinear operators $F$ and $V$ with relations
\begin{equation} \label{Efv}
FV=VF=p, \ F \lambda = \lambda^\sigma F, \ \lambda V=V \lambda^\sigma,
\end{equation} for all $\lambda \in W(k)$.  There is an equivalence of categories $\dieu_*$
between $p$-divisible groups over $k$ and $\til\EE$-modules which are free of finite rank over $W(k)$.

Similarly, let $\EE = \til \EE \otimes_{W(k)} k$ be the reduction of the Cartier ring mod $p$; it is a non-commutative ring $k[F,V]$ subject to the same constraints as \eqref{Efv}, except that $FV = VF = 0$ in $\EE$.  Again, there is an equivalence of categories $\dieu_*$ between finite commutative group schemes (of rank $2g$) annihilated by $p$ and $\EE$-modules of finite dimension ($2g$) over $k$.
If $M = \dieu_*(G)$ is the Dieudonn\'e module over $k$ of $G$, then  
a principal quasipolarization  $\psi:G \to G^D$ induces a 
a nondegenerate symplectic form 
\begin{equation}
\label{eqdefpolar}
\xymatrix{
\ang{\cdot,\cdot}:M \times M \ar[r]& k
}
\end{equation}
on the underlying $k$-vector space of $M$, subject to the additional constraint that, for all $x$ and $y$ in $M$,
\begin{equation}
\label{eqproppolar}
\ang{Fx,y} = \ang{x,Vy}^\sigma.
\end{equation}

If $A$ is the Jacobian of a curve $X$, then there is an isomorphism of $\EE$-modules between the {\em contravariant} 
Dieudonn\'e module over $k$ of ${\rm Jac}(X)[p]$ 
and the de Rham cohomology group $H^1_{\rm dR}(X)$ by \cite[Section 5]{Oda}.  The canonical principal polarization on $\operatorname{Jac}(X)$ then induces a canonical isomorphism $\dieu_*(\operatorname{Jac}(X)[p]) \simeq H^1_{\rm dR}(X)$; we will use this identification without further comment.

For elements $A_1, \ldots, A_r \in \EE$, 
let $\EE(A_1, \ldots, A_r)$ denote the left ideal $\sum_{i=1}^r \EE A_i$ of $\EE$ generated by $\{A_i \mid 1 \leq i \leq r\}$.

\subsection{The $p$-rank and $a$-number} \label{Sprankanumber} \label{Sanumber}

For a $\bt_1$ group scheme $G/k$, 
the {\it $p$-rank} of $G$ is $f={\rm dim}_{\FF_p} {\rm Hom}(\mmu_p, G)$
where $\mmu_p$ is the kernel of Frobenius on $\GG_m$.
Then $p^f$ is the cardinality of $G(k)$.
The {\it $a$-number} of $G$ is 
\[a={\rm dim}_k {\rm Hom}(\aalpha_p, G),\]
where $\aalpha_p$ is the kernel of Frobenius on $\GG_a$.
It is well-known that $0 \leq f \leq g$ and $1 \leq a +f \leq g$.

Moreover, since $\mmu_p$ and $\aalpha_p$ are both simple group schemes,
the $p$-rank and $a$-number are additive;
\begin{equation}
\label{eqfadditive}
f(G\oplus H) = f(G)+f(H)\text{ and }a(G\oplus H) = a(G)+a(H).
\end{equation}

If $\til G$ is a $p$-divisible group, its $p$-rank and $a$-number are those of its $p$-torsion; $f(\til G) = f(\til G[p])$ and $a(\til G) = a(\til G[p])$.  Similarly, if $A$ is an abelian variety, then $f(A) = f(A[p])$ and $a(A) = a(A[p])$.

\subsection{The Ekedahl-Oort type} \label{Seotype}

As in \cite[Sections 5 \& 9]{O:strat}, the isomorphism type of a pqp ${\rm BT}_1$ group scheme 
$G$ over $k$ can be encapsulated into combinatorial data.
If $G$ is symmetric with rank $p^{2g}$, then there is a {\it final filtration} $N_1 \subset N_2 \subset \cdots \subset N_{2g}$ 
of ${\mathbb D}_*(G)$ as a $k$-vector space which is stable under the action of $V$ and $F^{-1}$ such that $i={\rm dim}(N_i)$ \cite[5.4]{O:strat}.

The {\it Ekedahl-Oort type} of $G$ is 
\[\nu=[\nu_1, \ldots, \nu_g], \ {\rm where} \ {\nu_i}={\rm dim}(V(N_i)).\]
The $p$-rank is ${\rm max}\{i \mid \nu_i=i\}$ and the $a$-number equals $g-\nu_g$.
There is a restriction $\nu_i \leq \nu_{i+1} \leq \nu_i +1$ on the Ekedahl-Oort type.
There are $2^g$ Ekedahl-Oort types of length $g$ since all sequences satisfying this restriction occur.   
By \cite[9.4, 12.3]{O:strat}, there are bijections between (i) Ekedahl-Oort types of length $g$; (ii) pqp ${\rm BT}_1$ group schemes over $k$ of rank $p^{2g}$;
and (iii) pqp Dieudonn\'e modules of dimension $2g$ over $k$.

\begin{example}\label{exi11} {\em The group scheme $I_{1,1}$.}
There is a unique ${\rm BT}_1$ group scheme of rank $p^2$ which has $p$-rank $0$, which we denote $I_{1,1}$.
It fits in a non-split exact sequence 
\begin{equation}
\label{eqdefi11}
0 \to \aalpha_p \to I_{1,1} \to \aalpha_p \to 0.
\end{equation}
The structure of $I_{1,1}$ is uniquely determined over $\overline{\FF}_p$ by 
this exact sequence.  The image of $\aalpha_p$ is the kernel of $\frob$ and $\ver$.
The Dieudonn\'e module of $I_{1,1}$ is $$M_{1,1} := \dieu_*(I_{1,1}) \simeq \EE/\EE(F+V).$$
If $E$ is a supersingular elliptic curve, then the $p$-torsion group scheme $E[p]$ is isomorphic to $I_{1,1}$.
\end{example}

\section{Superspecial rank} \label{Sssrank}

Let $A$ be a principally polarized abelian variety defined over an algebraically closed field $k$ of characteristic $p >0$.

\subsection{Superspecial}  

First, recall the definition of the superspecial property.

\begin{definition}
One says that $A/k$ is {\it superspecial} if it satisfies the following equivalent conditions:
\begin{enumerate}
\item The $a$-number of $A$ equals $g$.
\item The group scheme $A[p]$ is isomorphic to $I_{1,1}^g$.
\item The Dieudonn\'e module over $k$ of $A[p]$ is isomorphic to $M_{1,1}^g$.
\item $A$ is isomorphic (as an abelian variety without polarization) to the product of $g$ supersingular elliptic curves.
\end{enumerate}
\end{definition}
 
A superspecial abelian variety is defined over $\overline{\FF}_p$, and thus over a finite field.
For every $g \in \NN$ and prime $p$, the number of superspecial principally polarized abelian varieties of dimension $g$ defined over 
$\overline{\FF}_p$ is finite and non-zero.

\subsection{Definition of superspecial rank} \label{Sssdef}

Recall (Example \ref{exi11}) that the $p$-torsion group scheme of a supersingular elliptic curve is isomorphic to $I_{1,1}$, 
the unique local-local pqp ${\rm BT}_1$ group scheme of rank $p^2$.  
From \eqref{eqdefi11}, it follows that $I_{1,1}$ is not simple as a group
scheme.  However, $I_{1,1}$ is
simple in the category of $\bt_1$ group schemes since $\aalpha_p$ is not a $\bt_1$ group scheme. 

\begin{definition}
Let $G/k$ be a $\bt_1$ group scheme.  
A {\it superspecial factor} of $G$ is a group scheme $H \subset G$ with $H \simeq I_{1,1}^s$.
\end{definition}

By the equivalence of categories $\dieu_*$, 
superspecial factors of $G$ of rank $2s$ are in bijection with
$\EE$-submodules $N \subset \dieu_*(G)$ with $N \simeq (\EE/\EE(F+V))^s$; 
we call such an $N$ a {\it superspecial factor} of $M=\dieu_*(G)$.

Now suppose $(G, \psi)/k$ is a pqp $\bt_1$ group scheme.
 A superspecial factor $H$ of $G$ is
{\it polarized} if the isomorphism $\psi: G \to G^D$ restricts to an
isomorphism $\psi_H: H \to G^D \twoheadrightarrow H^D$.  
Equivalently, a superspecial
factor $N$ of $(\dieu_*(G), \ang{\cdot,\cdot})$ is polarized if
the nondegenerate symplectic form $\ang{\cdot,\cdot}:M \times M \to k$
restricts to a non-degenerate symplectic form $\ang{\cdot,\cdot}:N
\times N \to k$.

\begin{definition}
Let $G=(G,\psi)/k$ be a pqp $\bt_1$ group scheme.
The {\it superspecial rank} $s(G)$ of $G$ is the largest integer $s$
for which $G$ has a polarized superspecial factor of rank $2s$.
\end{definition}

Since $I_{1,1}$ is simple in the category of $\bt_1$ group schemes, the superspecial rank $s$ 
has an additive property similar to that for the $p$-rank and $a$-number \eqref{eqfadditive};
if $G$ and $H$ are pqp $\bt_1$ group schemes, then 
\begin{equation}
\label{eqsadditive}
s(G\oplus H) = s(G)+s(H).
\end{equation}

A $\bt_1$ group scheme $G$ may fail to be simple (i.e., admit a nontrivial
$\bt_1$ subgroup scheme $0\subsetneq H \subsetneq G$) and yet still be
indecomposable (i.e., admit no isomorphism $G \simeq H\oplus K$ with
$H$ and $K$ nonzero).  This distinction vanishes in the
category of pqp $\bt_1$ group schemes:

\begin{lemma}
\label{lemdecompbt1}
Let $G/k$ be a pqp $\bt_1$ group scheme, and let $H\subset G$ be a pqp
$\bt_1$ sub-group scheme.  Let $N = \dieu_*(H) \subseteq M = \dieu_*(G)$, and let $P$
be the orthogonal complement of $N$ in $M$.  
Then $P$ is a pqp
sub-Dieudonn\'e module of $M$, and $G$ admits a decomposition $G\simeq
H\oplus K$ as pqp $\bt_1$ group schemes, where $K\subseteq G$ is the sub-group scheme with  $\dieu_*(K) = P$.
\end{lemma}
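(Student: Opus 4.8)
The plan is to pass entirely to covariant Dieudonné theory and argue with the symplectic $k$-vector space $(M,\ang{\cdot,\cdot}) = (\dieu_*(G),\ang{\cdot,\cdot})$, whose form satisfies the adjunction relation \eqref{eqproppolar}. Since $H$ is a pqp sub-group scheme, the restriction of $\ang{\cdot,\cdot}$ to $N = \dieu_*(H)$ is nondegenerate; by elementary symplectic linear algebra this forces $M = N \oplus P$ as $k$-vector spaces, where $P = N^\perp$, and the restriction of $\ang{\cdot,\cdot}$ to $P$ is again nondegenerate (if $v\in P$ is orthogonal to all of $P$ then, being orthogonal to $N$ as well, it is orthogonal to all of $M$, hence $v=0$).

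The first substantive step is to check that $P$ is an $\EE$-submodule of $M$, i.e.\ stable under $F$ and $V$; this is where \eqref{eqproppolar} is essential. For $x\in P$ and $y\in N$, the submodule $N$ is $V$-stable, so $Vy\in N$ and $\ang{Fx,y} = \ang{x,Vy}^\sigma = 0$, whence $Fx\in N^\perp = P$. Likewise, using antisymmetry together with \eqref{eqproppolar} applied to the pair $(y,x)$, one has $\ang{Vx,y} = -\ang{y,Vx} = -\ang{Fy,x}^{\sigma^{-1}} = 0$ because $N$ is $F$-stable and $x\in N^\perp$; hence $Vx\in P$. So $P$ is a pqp sub-Dieudonné module of $M$. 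Next I would verify that $P$ is the Dieudonné module of a $\bt_1$ group scheme: the relations $FV=VF=0$ already hold on all of $M$, and from the decomposition $M = N\oplus P$ of $\EE$-modules the identity $\ker(F\rest{M}) = \im(V\rest{M})$ (valid since $G$ is a $\bt_1$ group scheme) splits as a direct sum; since $H$ is a $\bt_1$ group scheme we have $\ker(F\rest{N}) = \im(V\rest{N})$, and projecting onto the $P$-summand gives $\ker(F\rest{P}) = \im(V\rest{P})$, and symmetrically $\ker(V\rest{P}) = \im(F\rest{P})$. By the equivalence of categories $\dieu_*$ and the classification of pqp $\bt_1$ group schemes, $P = \dieu_*(K)$ for a unique pqp $\bt_1$ group scheme $K$, and the orthogonal splitting $M = N \perp P$ of pqp Dieudonné modules translates back to an isomorphism $G \simeq H\oplus K$ of pqp $\bt_1$ group schemes. (When $p=2$ one uses the modified pairing of \cite{O:strat} in place of $\ang{\cdot,\cdot}$; the argument is unchanged.)

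The main obstacle is precisely the $F$- and $V$-stability of $N^\perp$: this is the step that fails in the absence of a polarization — as the remark preceding the lemma notes, an unpolarized $\bt_1$ group scheme can be non-simple yet indecomposable — and it is exactly the adjunction relation \eqref{eqproppolar} forced by the principal quasipolarization that makes the orthogonal complement a sub-Dieudonné module. Everything else is routine symplectic linear algebra and bookkeeping through the equivalence of categories $\dieu_*$.
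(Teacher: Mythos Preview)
Your proposal is correct and follows essentially the same approach as the paper: both pass to Dieudonn\'e modules, use the adjunction relation \eqref{eqproppolar} to show $P=N^\perp$ is $F$- and $V$-stable, observe the form remains nondegenerate on $P$, and then deduce the $\bt_1$ conditions on $P$ from those on $M$ and $N$ via the direct-sum splitting. You supply a bit more detail on the symplectic linear algebra and on the $p=2$ caveat, but there is no substantive difference.
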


Lemma \ref{lemdecompbt1} is essentially present in \cite[Section 5]{kraft}; see, e.g., \cite[9.8]{O:strat}.

\begin{proof}
The $k$-vector space $P$ is an $\EE$-module if it is stable under $F$ and $V$. 
It suffices to check that, for $\beta\in P$,  $F \beta \in P$ and $V \beta \in P$.
If $\alpha \in N$, the relation \eqref{eqproppolar} implies that
\[
\ang{F\beta, \alpha} = \ang{\beta, V \alpha}^\sigma = 0^\sigma = 0
\]
and
\[
\ang{V\beta, \alpha} = \ang{\beta,F \alpha}^{\sigma^{-1}} = 0^{\sigma^{-1}}=0.
\]
Thus $F \beta$ and $V \beta$ are in the orthogonal complement $P$ of
$N$.

Since $H$ is polarized, the restriction of $\ang{\cdot,\cdot}$ to $N$ is perfect and so the 
restriction of $\ang{\cdot,\cdot}$ to $P$ is perfect as well.  Since
$\dieu_*$ is an equivalence of categories, there is a decomposition
$G\simeq H \oplus K$ as pqp group schemes.  It remains to verify that
$K$ is a $\bt_1$ group scheme, i.e., that $\ker(\frob) = \im(\ver)$ and $\ker(\ver)
= \im(\frob)$.  In terms of Dieudonn\'e modules, this is equivalent to the property that
$\ker F\rest P = V(P)$ and $\ker V\rest P = F(P)$.  This, in turn,
follows from the analogous statement for $M$ and $N$ and from the fact
that the decomposition $M = N \oplus P$ is stable under $F$ and $V$.
\end{proof}

\begin{lemma}
\label{lemsplitss}
Let $G/k$ be a pqp $\bt_1$ group scheme of $p$-rank $f$ and $a$-number $a$, and let $H\subset G$ be a maximal 
polarized superspecial factor.  Then $G \simeq H \oplus K$ for a pqp
$\bt_1$ group scheme $K$ with respective $p$-rank, superspecial rank and $a$-number $f(K)=f$, $s(K) =0$, and $a(K) = a-s$.
\end{lemma}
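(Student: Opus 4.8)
The plan is to apply Lemma \ref{lemdecompbt1} to the maximal polarized superspecial factor $H \subset G$, and then check that the complementary summand $K$ has the stated invariants. Since $H$ is a polarized superspecial factor, it is in particular a pqp $\bt_1$ sub-group scheme of $G$, so Lemma \ref{lemdecompbt1} immediately gives a decomposition $G \simeq H \oplus K$ as pqp $\bt_1$ group schemes, where $\dieu_*(K)$ is the orthogonal complement of $\dieu_*(H)$ in $\dieu_*(G)$. It remains only to read off $f(K)$, $s(K)$, and $a(K)$ from this decomposition.

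**First I would** compute the $p$-rank and $a$-number of $K$. Since $H \simeq I_{1,1}^s$, Example \ref{exi11} shows $H$ has $p$-rank $0$ and $a$-number $s$. By the additivity of the $p$-rank and $a$-number \eqref{eqfadditive}, we get $f = f(G) = f(H) + f(K) = f(K)$ and $a = a(G) = a(H) + a(K) = s + a(K)$, hence $f(K) = f$ and $a(K) = a - s$.

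**The one point requiring a genuine argument** — and the main obstacle — is showing $s(K) = 0$, i.e., that $K$ has no nonzero polarized superspecial factor. Here I would argue by contradiction, invoking additivity of the superspecial rank \eqref{eqsadditive}: if $s(K) \geq 1$, then $s(G) = s(H) + s(K) = s + s(K) > s$. But a polarized superspecial factor of $K$ of rank $2s(K)$ together with $H$ yields a polarized superspecial factor of $G$ of rank $2(s + s(K))$, contradicting the maximality of $H$ among polarized superspecial factors of $G$. (One should double-check that the orthogonal direct sum of two polarized superspecial factors is again a polarized superspecial factor, which is immediate: $I_{1,1}^s \oplus I_{1,1}^{s(K)} \simeq I_{1,1}^{s + s(K)}$ and the symplectic form restricts nondegenerately to an orthogonal direct sum of spaces on which it is already nondegenerate.) Alternatively, and perhaps more cleanly, one can simply observe that $s(G) = s$ by definition of $H$ being maximal, so \eqref{eqsadditive} forces $s(K) = s(G) - s(H) = s - s = 0$ directly.

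**In summary**, the proof is short: extract $K$ via Lemma \ref{lemdecompbt1}, then use additivity of $f$, $a$, and $s$ together with the known invariants $f(I_{1,1}^s) = 0$, $a(I_{1,1}^s) = s$, $s(I_{1,1}^s) = s$. No delicate case analysis or characteristic-$2$ subtlety enters here, since all of that has been absorbed into the cited classification and into Lemma \ref{lemdecompbt1}.
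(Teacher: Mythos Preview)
Your proposal is correct and follows essentially the same approach as the paper: the paper's proof simply invokes Lemma \ref{lemdecompbt1} for the decomposition and then cites additivity \eqref{eqfadditive} and \eqref{eqsadditive} for the invariants of $K$, which is exactly what you do (with more detail spelled out).
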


\begin{proof}
The existence of the decomposition $G\simeq H\oplus K$ follows from Lemma
\ref{lemdecompbt1}; the assertions about the $p$-rank, superspecial
rank and $a$-number of $K$ follow from the additivity of these quantities, \eqref{eqfadditive} and
\eqref{eqsadditive}.
\end{proof}

Since one can always canonically pull off the \'etale and toric components of a finite group scheme over a perfect field, Lemma \ref{lemsplitss} admits a further refinement:

\begin{lemma}
\label{Lpulloffs}
Let $G/k$ be a pqp $\bt_1$ group scheme with $f(G)=f$, $s(G) = s$, and $a(G) = a$.  Then there is a local-local pqp $\bt_1$ 
group scheme $B$ such that
\[
G \simeq (\ZZ/p\oplus \mmu_p)^f \oplus I_{1,1}^s \oplus B
\]
where $f(B) = s(B) = 0$ and $a(B) = a-s$.
\end{lemma}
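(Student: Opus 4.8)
The plan is to bootstrap from Lemma \ref{lemsplitss} by stripping off, in turn, the superspecial part, then the étale and toric parts, each time invoking the orthogonal-complement decomposition of Lemma \ref{lemdecompbt1}. First I would apply Lemma \ref{lemsplitss} to write $G \simeq I_{1,1}^s \oplus K$ as pqp $\bt_1$ group schemes, where $K$ satisfies $f(K) = f$, $s(K) = 0$, and $a(K) = a - s$; here I use that a maximal polarized superspecial factor has rank $2s$ precisely by the definition of $s(G)$, and that $H \simeq I_{1,1}^s$. So it remains to analyze $K$, a pqp $\bt_1$ group scheme with $p$-rank $f$, superspecial rank $0$, and $a$-number $a - s$.

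Next I would handle the étale and toric parts of $K$. Over the perfect field $k$ there is a canonical connected-étale sequence, and dually a canonical decomposition peeling off the multiplicative (toric) part; since $k$ is perfect both sequences split canonically, so $K \simeq K^{\mathrm{et}} \oplus K^{\mathrm{mult}} \oplus K^{\mathrm{ll}}$ with $K^{\mathrm{et}}$ étale, $K^{\mathrm{mult}}$ of multiplicative type, and $K^{\mathrm{ll}}$ local-local. I should check this decomposition is compatible with the principal quasipolarization: the quasipolarization pairs the étale part with the multiplicative part (Cartier duality interchanges these), while the local-local part is self-dual under $\psi$. Concretely, in Dieudonn\'e-module terms, $\dieu_*(K^{\mathrm{et}})$ and $\dieu_*(K^{\mathrm{mult}})$ are the subspaces where $F$ is bijective, respectively $V$ is bijective, and these are each isotropic and in perfect pairing with one another via \eqref{eqproppolar}; thus $K^{\mathrm{et}} \oplus K^{\mathrm{mult}}$ inherits a principal quasipolarization, and by Lemma \ref{lemdecompbt1} its orthogonal complement $K^{\mathrm{ll}}$ does too. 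A pqp $\bt_1$ group scheme of $p$-rank $f$ whose underlying group scheme is an extension of étale by multiplicative type is forced to be $(\ZZ/p \oplus \mmu_p)^f$ — this is the standard fact that the only such object is the $p$-torsion of a torus times its dual, equivalently the Ekedahl-Oort type with $\nu_i = i$. Setting $B := K^{\mathrm{ll}}$ gives the desired local-local pqp $\bt_1$ group scheme.

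Finally I would read off the invariants of $B$. Additivity of the $p$-rank \eqref{eqfadditive} and the fact that $(\ZZ/p\oplus\mmu_p)^f$ accounts for all of the $p$-rank $f$ of $K$ forces $f(B) = 0$. Since $s(K) = 0$ and superspecial rank is additive \eqref{eqsadditive}, also $s(B) = 0$. For the $a$-number, additivity \eqref{eqfadditive} gives $a(B) = a(K) - a((\ZZ/p\oplus\mmu_p)^f) = (a - s) - 0 = a - s$, using that the étale-toric part has trivial $a$-number. Assembling, $G \simeq (\ZZ/p\oplus\mmu_p)^f \oplus I_{1,1}^s \oplus B$ with the stated invariants.

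The main obstacle I anticipate is the compatibility check in the second paragraph: verifying that the canonical étale/toric/local-local decomposition of $K$ respects the quasipolarization, and in particular that the "étale $\oplus$ toric" summand is itself principally quasipolarized so that Lemma \ref{lemdecompbt1} applies to split off $B$. This amounts to the duality statement that $\psi$ sends the étale part isomorphically onto the dual of the toric part; it is routine but must be stated carefully, especially in light of the paper's earlier remark that the $p=2$ case requires a slightly modified notion of quasipolarization. Everything else is bookkeeping with the additivity relations already established.
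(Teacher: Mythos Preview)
Your argument is correct; the only difference from the paper is the order of the two reductions. The paper first pulls off the \'etale and toric parts --- asserting in one line that since $k$ is perfect and $G$ is self-dual there is a canonical pqp decomposition $G \simeq (\ZZ/p\oplus\mmu_p)^f \oplus H$ with $f(H)=0$, $s(H)=s$, $a(H)=a$ --- and only then invokes Lemma~\ref{lemsplitss} on $H$ to split off $I_{1,1}^s$. You do these in the opposite order: superspecial factor first via Lemma~\ref{lemsplitss}, then the \'etale/toric split on the complement $K$. Both routes are valid; the paper's ordering is slightly more economical because Lemma~\ref{lemsplitss} already packages the invariant bookkeeping, whereas your ordering forces you to re-verify by hand that the connected-\'etale decomposition of $K$ respects the quasipolarization (the ``main obstacle'' you flag). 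That verification is indeed routine, and your sketch of it via \eqref{eqproppolar} is fine.
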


\begin{proof}
Since $k$ is perfect and $G$ is self-dual, there is a canonical decomposition of pqp group schemes $G \simeq (\ZZ/p\oplus \mmu_p)^f \oplus H$.  Then $f(H) = 0$, $s(H) = s(G)$, and $a(H) = a(G)$.  Now invoke Lemma \ref{lemsplitss}.
\end{proof}

%
%
%
%
Let $A$ be a principally polarized abelian variety of dimension $g$.  On one hand, $A$ is superspecial if and only if $s(A[p]) = g$.  On the other hand, if $A$ is ordinary, then $s(A[p]) =0$.  More generally:

\begin{lemma}
\label{lemsanda}
Let $G/k$ be a pqp $\bt_1$ group scheme of rank $p^{2g}$; let $f = f(G)$, $a=a(G)$, and $f = f(G)$.
\begin{alphabetize}
\item Then $0 \le s \le a \le g-f$.
\item If $a = g-f$, then $G \simeq (\ZZ/p\oplus \mmu_p)^f \oplus I_{1,1}^a$ and $s=a$.
\item If $a\not = g-f$, then $s<a$.
\end{alphabetize}
\end{lemma}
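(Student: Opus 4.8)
The plan is to deduce all three statements from the canonical decomposition of Lemma~\ref{Lpulloffs}, together with the additivity of $f$, $a$, $s$ and one rigidity input from the Ekedahl--Oort classification. First I would invoke Lemma~\ref{Lpulloffs} to write $G \simeq (\ZZ/p\oplus\mmu_p)^f \oplus I_{1,1}^s \oplus B$ with $B$ a local-local pqp $\bt_1$ group scheme satisfying $f(B) = s(B) = 0$ and $a(B) = a-s$; comparing ranks shows $B$ has rank $p^{2g'}$ with $g' := g-f-s \ge 0$. Everything then reduces to the following dichotomy for $B$: \emph{the conditions $B = 0$, $a(B) = 0$, and $a(B) = g'$ are equivalent.}

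To prove the dichotomy: $B=0$ immediately gives $a(B)=0=g'$. If $a(B)=0$, then since also $f(B)=0$ the lower bound $a+f\ge 1$ for a nonzero pqp $\bt_1$ group scheme (Section~\ref{Sanumber}) forces $g'=0$, i.e.\ $B=0$. If $a(B)=g'$ and $g'\ge 1$, then $B$ is a pqp $\bt_1$ group scheme of rank $p^{2g'}$ whose $a$-number equals $g'$, so $\nu_{g'}(B)=0$, whence its Ekedahl--Oort type is $[0,\ldots,0]$ by the constraints in Section~\ref{Seotype}; but $I_{1,1}^{g'}$ has $a$-number $g'$ by additivity (recall $a(I_{1,1})=1$, cf.\ Example~\ref{exi11}) and hence the same Ekedahl--Oort type, so the classification \cite[9.4, 12.3]{O:strat} gives $B\simeq I_{1,1}^{g'}$, contradicting $s(B)=0$; therefore $g'=0$ here as well. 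The main obstacle is precisely this last implication: it is the one step where additivity of the three invariants does not suffice, and one must invoke the full Ekedahl--Oort classification to upgrade ``$a$-number maximal'' to ``$\simeq I_{1,1}^{g'}$''.

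Granting the dichotomy, parts (a)--(c) are bookkeeping with the identities $a(B)=a-s$ and $g'=g-f-s$. For (a): $s\ge 0$ is clear, $a(B)=a-s\ge 0$ gives $s\le a$, and $a-s=a(B)\le g'-f(B)=g'$ gives $a\le g-f$. For (b): if $a=g-f$ then $a(B)=a-s=g-f-s=g'$, so the dichotomy forces $B=0$, hence $g'=0$, hence $s=g-f=a$ and $G\simeq(\ZZ/p\oplus\mmu_p)^f\oplus I_{1,1}^a$. For (c): if $s=a$ then $a(B)=0$, so the dichotomy again forces $B=0$ and $g'=0$, giving $a=g-s=g-f$; thus $a\ne g-f$ forces $s\ne a$, and combined with $s\le a$ from part (a) this yields $s<a$.
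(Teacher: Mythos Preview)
Your proof is correct and follows essentially the same approach as the paper: both invoke Lemma~\ref{Lpulloffs}, use additivity of $f$, $a$, $s$, and appeal to the fact that the unique local-local pqp $\bt_1$ group scheme of rank $p^{2g'}$ with $a$-number $g'$ is $I_{1,1}^{g'}$. The only difference is organizational---you package this last fact into a single ``dichotomy for $B$'' and read off (a)--(c), whereas the paper argues each part directly---and there is a small typo in your part (c): from $g'=0$ and $s=a$ you should write $a=s=g-f$, not $a=g-s=g-f$.
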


\begin{proof}
Write $G \simeq (\ZZ/p \oplus \mmu_p)^f \oplus B_1$ with $B_1 \simeq I_{1,1}^s \oplus B$ as in Lemma \ref{Lpulloffs}.
\begin{alphabetize}
\item Then $a \leq g-f$, since (using additivity) $a(G) = a(B_1)$, and $B_1$ has rank $p^{2(g-f)}$.  Moreover,  $s \le a$ 
since $a(I_{1,1}^s) = s$.
\item This is true since the only
pqp ${\rm BT}_1$ group scheme of rank $p^{2(g-f)}$ with $p$-rank $0$ and $a$-number $g-f$ is $I_{1,1}^{g-f}$, 
which has superspecial rank $g-f$ by definition.
\item The hypothesis $a \not = g-f$ implies that $B$ is non-trivial. 
Then $a > s$ since the $a$-number of the local-local group scheme $B$ is at least $1$.
\end{alphabetize}
\end{proof}

\subsection{Unpolarized superspecial rank}

If $G/k$ is a $\bt_1$ group scheme, or indeed any $p$-torsion finite
commutative group scheme, then there is also an obvious notion of an
{\em unpolarized} superspecial rank, namely, the largest $u$ such that
there is an inclusion $I_{1,1}^{u}\hookrightarrow G$. In this section,
we briefly explore some of the limitations of this notion.

For integers $r,s \ge 1$, let $J_{r,s}$ be the $\bt_1$ group scheme with
Dieudonn\'e module
\[
M_{r,s} := \dieu_*(J_{r,s}) = \EE/\EE(F^r+V^s).
\]

\begin{lemma}
\label{lemirs}
Suppose $r,s \ge 2$.  Then
\begin{alphabetize}
 \item $J_{r,s}$ is an indecomposable local-local $\bt_1$ group scheme.
\item There exists an inclusion $\iota:I_{1,1} \hookrightarrow
  J_{r,s}$.
\end{alphabetize}
\end{lemma}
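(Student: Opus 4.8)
The plan is to work entirely on the level of Dieudonné modules, using the presentation $M_{r,s} = \EE/\EE(F^r + V^s)$. First I would establish a convenient $k$-basis for $M_{r,s}$ as a $k$-vector space. Since $FV = VF = 0$ in $\EE$, the relation $F^r = -V^s$ lets one reduce any word in $F, V$ to a $k$-linear combination of the elements $1, F, F^2, \ldots, F^{r-1}$ together with $V, V^2, \ldots, V^{s-1}$ (any mixed word contains an $FV$ or $VF$ and vanishes, so monomials are pure powers of $F$ or of $V$; and $F^r, F^{r+1}, \ldots$ as well as $V^s, V^{s+1}, \ldots$ are killed because, e.g., $F \cdot F^r = F \cdot (-V^s) = -(FV)V^{s-1} = 0$). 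So $\dim_k M_{r,s} = r + s$, with basis the images of $1, F, \ldots, F^{r-1}, V, \ldots, V^{s-1}$. The operators act by shifting within these two chains, with $F$ sending $F^{r-1} \mapsto F^r = -V^s = 0$ and $V$ sending $V^{s-1} \mapsto V^s = F^r = $ (the image of $1$ under... wait) — more precisely $V \cdot F^{r-1} = 0$ and $F \cdot V^{s-1} = 0$ as well, while $V^s = -F^r$ forces the ``wrap'' relation; I will have to be careful that the single relation genuinely glues the two chains at the bottom, so that $M_{r,s}$ is in fact indecomposable rather than a direct sum of two cyclic pieces.

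For part (a), indecomposability: from the explicit basis, $\ker F \rest M_{r,s}$ is spanned by $F^{r-1}$ and $V, V^2, \ldots, V^{s-1}$ (dimension $s$), and $\operatorname{Im} V$ is spanned by $V, \ldots, V^{s-1}$ and $V^s = -F^r = 0$... so I must recompute: $\operatorname{Im} V$ is spanned by the images $V\cdot 1 = V, V\cdot F = 0, \ldots$ and $V \cdot V^{i} = V^{i+1}$, giving $V, V^2, \ldots, V^{s-1}$ plus $V^s = 0$; that is dimension $s-1$, which would contradict $\ker F = \operatorname{Im} V$ for a $\bt_1$ module. The resolution is that $V^s$ is \emph{not} zero — the relation is $F^r + V^s = 0$ in the quotient, i.e. $V^s = -F^r$, and $F^r$ itself is the nonzero relation element... no, $F^r$ maps to $-V^s$ which is a genuine nonzero element of the module (it is $-V^s$ in the $V$-chain if $s \geq 1$; but we also said $V\cdot V^{s-1} = V^s$). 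I will need to sort out precisely which of $F^r, V^s$ survive; the cleanest approach is: the module is $\EE/\EE(F^r+V^s)$, and I claim $\{1, F, \ldots, F^{r-1}, V, \ldots, V^{s-1}\}$ is a basis — with $F \cdot F^{r-1} = F^r = -V^s$ and $V \cdot V^{s-1} = V^s = -F^r$, and $V\cdot F^{i} = 0$ for $i \geq 1$, $F \cdot V^j = 0$ for $j \geq 1$, $F\cdot 1 = F$, $V \cdot 1 = V$. So $F^r = -V^s$ is the ``top'' element reached from both ends, and $\ker F = \operatorname{span}(V, \ldots, V^{s-1}, V^s = -F^r)$... this has dimension $s$, matching $\operatorname{Im} V = \operatorname{span}(V, \ldots, V^s)$ of dimension $s$ — good, they agree. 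From this description, any $\EE$-submodule is determined by which ``levels'' of the $F$-chain and $V$-chain it contains, and because $F^r = -V^s$ ties the two chains together at the top, one cannot split $M_{r,s}$ as a nontrivial direct sum of $\EE$-modules; I would verify this by showing the submodule generated by $1$ (equivalently, the whole module, since $1$ generates) cannot be written as a sum of two proper $F,V$-stable subspaces with trivial intersection. Local-local follows since there are no $F$- or $V$-isomorphisms onto it, i.e. $F$ and $V$ are both nilpotent on $M_{r,s}$ (as $F^{r+1} = 0 = V^{s+1}$), so $\operatorname{Hom}(\mu_p, J_{r,s}) = 0$ and $\operatorname{Hom}(\ZZ/p, J_{r,s}) = 0$.

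For part (b), I will exhibit the inclusion $\iota$ directly as an $\EE$-submodule $N \subset M_{r,s}$ with $N \simeq \EE/\EE(F+V) = M_{1,1}$. The natural candidate is the image of the element $F^{r-1} + V^{s-1}$ (or a suitable scalar combination): call it $w$. Then $Fw = F^r + F\cdot V^{s-1} = F^r + 0 = F^r = -V^s$, and $Vw = V\cdot F^{r-1} + V^s = 0 + V^s = V^s = -F^r$, so $Fw = Vw$, hence $(F+V)w = 2Fw$; since $r, s \geq 2$, one more application gives $F(Fw) = F(-V^s) = -(FV)V^{s-1} = 0$ and similarly $V(Fw) = 0$, so $Fw$ spans a copy of $\aalpha_p$ and $\{w, Fw\}$ spans a two-dimensional $\EE$-submodule $N$. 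In characteristic $\neq 2$ I rescale to get $(F+V)w = 0$, i.e. $Fw = -Vw$; in characteristic $2$, $F w = Vw$ already gives $(F+V)w = 0$. Either way $N$ is cyclic on $w$ with the single relation $F + V$ (one checks $N$ is not killed by $F$ alone, so it is exactly $M_{1,1}$, not $\aalpha_p^2$), yielding $N \simeq M_{1,1}$ and hence $\iota : I_{1,1} \hookrightarrow J_{r,s}$.

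The main obstacle I anticipate is bookkeeping in part (a): pinning down the $\EE$-module structure of $M_{r,s}$ precisely enough — in particular the identification $F^r = -V^s$ at the ``top'' of the two chains — to rule out a direct-sum decomposition, while keeping track of the characteristic-$2$ sign subtleties. The construction in (b) is then essentially forced, but I should double-check the nondegeneracy/exactness bookkeeping (that $N$ is genuinely $I_{1,1}$ and not $\aalpha_p \oplus \aalpha_p$) since that is exactly the kind of thing that depends on $w \not\in \ker F$, which in turn uses $r, s \geq 2$.
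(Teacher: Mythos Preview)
For part (b), your choice $w = F^{r-1} + V^{s-1}$ is exactly the element the paper uses. Your sign computation goes astray, though: from $Fw = F^r$ and $Vw = V^s$ together with the relation $F^r + V^s = 0$ you get $Fw = -Vw$ directly, so $(F+V)w = 0$ in every characteristic and no rescaling is needed. The rest of your argument (that $Fw = F^r \neq 0$ since $r,s\ge 2$, hence the $\EE$-submodule generated by $w$ is two-dimensional and isomorphic to $M_{1,1}$) is correct and matches the paper.

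For part (a), your bookkeeping is off: the list $\{1, F, \ldots, F^{r-1}, V, \ldots, V^{s-1}\}$ has only $r+s-1$ elements; you are missing the nonzero ``top'' element $F^r = -V^s$, which you yourself use a moment later. More seriously, the claim that every $\EE$-submodule is ``determined by which levels of the $F$-chain and $V$-chain it contains'' is false --- submodules can be generated by linear combinations across the chains, and indeed your own $w$ in part (b) is exactly such an element. So this route to indecomposability does not work as stated. The paper bypasses all of this with a one-line argument: $M_{r,s}$ is generated over $\EE$ by a single element, so $a(J_{r,s}) = 1$; since every nonzero local-local $\bt_1$ group scheme has $a$-number at least $1$, additivity of the $a$-number \eqref{eqfadditive} forces $J_{r,s}$ to be indecomposable. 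Your nilpotency argument for local-local is fine.
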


\begin{proof}
Part (a) is standard.  Indeed, using the relations $F^r = -V^s$ and
$FV = VF = p$, one sees that $F$ and $V$ act nilpotently, and thus
$J_{r,s}$ is local-local; in particular, it has $p$-rank zero.  
Note that $M_{r,s}$ is generated over $\EE$ by
a single element $x$ such that $F^r x = -V^sx$. 
It follows that $a(J_{r,s}) = 1$.  
The additivity relation \eqref{eqfadditive} now implies that $M_{r,s}$ is
indecomposable.

For (b), let $y \in M_{r,s}$ be an element such that $Fy = - Vy \not =
0$.  Since $r,s \geq 2$, the element $y= F^{r-1}x+V^{s-1}x$ is suitable.  Then
there is an inclusion $\iota_*:M_{1,1} \to M_{r,s}$ which sends a
generator of $M_{1,1}$ to $y$.
\end{proof}

If $r = s$, then $J_{r,s}$ is self-dual and admits a principal
quasipolarization; in this case, let $H_{r,s} = J_{r,s}$.  If $r\not = s$,
then the Cartier dual of $J_{r,s}$ is
$J_{s,r}$; in this case, $H_{r,s}:= J_{r,s} \oplus J_{s,r}$ admits a principal
quasipolarization. 

In spite of Lemma \ref{lemirs}, we find:
\begin{lemma}
\label{lemsshrs0}
Suppose $r, s \ge 2$.  For any principal quasipolarization on $H_{r,s}$, the superspecial rank of $H_{r,s}$ is zero.
\end{lemma}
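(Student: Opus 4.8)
The plan is to show that no nonzero polarized superspecial factor can exist inside $H_{r,s}$ when $r,s \ge 2$, by analyzing which elements of the Dieudonné module $M = \dieu_*(H_{r,s})$ can generate a submodule isomorphic to $M_{1,1} = \EE/\EE(F+V)$, and then showing that the symplectic form cannot restrict nondegenerately to any such submodule. By additivity of $s$ (\eqref{eqsadditive}) and the fact that a polarized superspecial factor decomposes off by Lemma \ref{lemdecompbt1}, it suffices to rule out a \emph{single} polarized copy of $I_{1,1}$, i.e.\ to show there is no $v \in M$ with $Fv = -Vv \ne 0$ such that $\ang{\cdot,\cdot}$ is nondegenerate on the $\EE$-span $\EE v = \langle v, Fv\rangle_k$ (a $2$-dimensional space, since $F(Fv) = F(-Vv) = -pv = 0$ and similarly $V(Fv)=0$ in $\EE$). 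Nondegeneracy of a symplectic form on a $2$-dimensional space is just the condition $\ang{v, Fv} \ne 0$, so the entire problem reduces to: \emph{for every $v$ with $Fv = -Vv \ne 0$, one has $\ang{v, Fv} = 0$.}

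The key computation is with the polarization identity \eqref{eqproppolar}, $\ang{Fx,y} = \ang{x,Vy}^\sigma$. First I would treat the self-dual case $r=s$, where $M = M_{r,r} = \EE/\EE(F^r+V^r)$ is free over $k$ with basis $x, Fx, \ldots, F^{r-1}x, V^{r-1}x, \ldots, Vx$ (of dimension $2r$), and $F,V$ act nilpotently with $F^r x = -V^r x$, $F^{r}(F^{j}x)=0$, etc. The elements $v$ with $Fv = -Vv$ form a small subspace; in fact writing $v$ in the basis and imposing $Fv = -Vv$, using that $F$ kills the $V$-tower top and $V$ kills the $F$-tower top, forces $v$ to lie in the span of $F^{r-1}x + V^{r-1}x$ and $F^{r-1}x - V^{r-1}x$... more carefully, $v = a(F^{r-1}x + V^{r-1}x)$ up to scalars when $r \ge 2$ (this is exactly the element exhibited in the proof of Lemma \ref{lemirs}(b)). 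For such $v$, $Fv$ lands in the one-dimensional socle $k\cdot F^r x = k \cdot V^r x$. Then $\ang{v, Fv}$ pairs an element of the "layer $r-1$" with an element of the socle; using \eqref{eqproppolar} repeatedly to push $F$'s across, $\ang{F^{r-1}x + V^{r-1}x,\ F^r x}$ can be rewritten in terms of $\ang{x, V^{2r-1}x}^{(\cdot)}$ and $\ang{V^{r-1}x, V F^{r-1} x}$-type terms; since $V^{2r-1}x = 0$ and $F^{r-1}x$ is a top of the $F$-tower killed by... I would organize this as: the socle $k F^r x$ is \emph{isotropic against everything except the cosocle generator}, and $v$ is not the cosocle generator when $r \ge 2$ — so $\ang{v,Fv}=0$.

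For the non-self-dual case $r \ne s$, $H_{r,s} = J_{r,s} \oplus J_{s,r}$ with its hyperbolic-type quasipolarization pairing the two summands (since $J_{r,s}^D = J_{s,r}$). Here the argument is cleaner: the form $\ang{\cdot,\cdot}$ vanishes on each summand $M_{r,s}$ and $M_{s,r}$ individually (they are maximal isotropics). Any $v$ with $Fv = -Vv \ne 0$ decomposes as $v = v_1 + v_2$ with $v_i$ in the two summands, each satisfying $Fv_i = -Vv_i$; then $\ang{v, Fv} = \ang{v_1, Fv_2} + \ang{v_2, Fv_1}$. Now $Fv_1 \in \EE v_1 \subseteq M_{r,s}$ lies in the socle of $M_{r,s}$, and I claim $\ang{v_2, Fv_1} = 0$: pairing $M_{s,r}$ against the socle of $M_{r,s}$, move the $F$ in $Fv_1 = F^{?}x_1$ across via \eqref{eqproppolar} to produce $V$'s hitting $v_2$, eventually hitting $0$ because $v_2$ sits at layer $s-1 \ge 1$ from the top and there are $\ge r \ge 2$ applications of $F$ to move, overshooting; symmetrically for the other term.

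The main obstacle is the bookkeeping in the self-dual case $r=s$: identifying precisely the $2$-dimensional (or possibly larger, if $r$ forces degeneracies) space of candidate generators $v$, and verifying $\ang{v,Fv}=0$ by a clean structural statement rather than a messy basis computation. The cleanest route is probably to observe that $\EE v \subseteq \ker F \cap \ker V$-nothing... rather: note that any superspecial factor $N \subseteq M$ satisfies $F|_N = -V|_N$, hence $F^2|_N = -FV|_N = 0$ and likewise $V^2|_N=0$, so $N$ is annihilated by $F^2, V^2$ and by $F+V$; thus $N$ is a quotient-submodule on which $\EE$ acts through $\EE/\EE(F+V, F^2)$. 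Any submodule of $M_{r,r}$ (or of $M_{r,s}\oplus M_{s,r}$) annihilated by $F^2$ and $V^2$ with $r,s\ge 2$ must land in $\ker F^2 \cap \ker V^2$, which one checks equals $\mathrm{span}\{F^{r-1}x, V^{r-1}x, F^r x\}$ (three-dimensional), and the restriction of $F$ to this space has image the socle $kF^rx$, which is easily seen to be isotropic in $M$ by \eqref{eqproppolar} (as the socle is $V(M) \cap F(M)$ pushed to the bottom, and $\ang{V(M),V(M)}=0$ follows by transposing to $\ang{M, F(M)}^{\sigma^{-1}}$-type relations landing in $\ang{M, \text{socle}\cdot\text{stuff}}=0$). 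Then $\ang{v,Fv}=0$ because $Fv$ is in the isotropic socle while $v$ is, up to the socle, also in $V(M)=F(M)$-adjacent layers — forcing the pairing to vanish. Assembling this into a two-line structural argument is where I expect to spend the real effort.
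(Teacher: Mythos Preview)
Your approach is fundamentally different from the paper's and misses the one-line structural argument that does all the work. The paper never computes $\ang{v,Fv}$ at all. For $r=s$, the entire proof is: $H_{r,r}=J_{r,r}$ is \emph{indecomposable} (Lemma~\ref{lemirs}(a)), while a nonzero polarized superspecial factor would split off as a pqp direct summand by Lemma~\ref{lemdecompbt1}; contradiction. For $r\ne s$ the paper argues similarly, reducing to the indecomposability of $J_{r,s}$. You never invoke indecomposability, and instead attempt an explicit symplectic computation.

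Your computation for $r=s$ can be made to work: one checks that any $v$ with $Fv=-Vv\ne 0$ lies in $\mathrm{span}_k\{F^{r-1}x,\,V^{r-1}x,\,F^r x\}$ and that $Fv\in k\cdot F^r x$; then $\ang{F^{r-1}x,F^r x}=\ang{F^{r-2}x,VF^r x}^\sigma=0$, $\ang{V^{r-1}x,F^r x}=-\ang{V^{r-1}x,V^r x}=-\ang{FV^{r-1}x,V^{r-1}x}^{\sigma^{-1}}=0$, and $\ang{F^r x,F^r x}=0$, so indeed $\ang{v,Fv}=0$. This is correct but laborious compared with the one-sentence indecomposability argument.

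For $r\ne s$ there is a genuine gap. You assert that ``the form vanishes on each summand $M_{r,s}$ and $M_{s,r}$ individually (they are maximal isotropics)'', but the lemma is stated for \emph{every} principal quasipolarization on $H_{r,s}$, and you give no argument that every such form makes the summands isotropic. This is not obvious: there do exist nonzero $\EE$-module maps $M_{r,s}\to M_{s,r}$ (e.g.\ for $r=2,s=3$ one finds a three-dimensional space of such maps), so there is something to rule out. Without this, your cross-term computation for $\ang{v_1,Fv_2}+\ang{v_2,Fv_1}$ does not cover all polarizations. The paper sidesteps this entirely by again reducing to the indecomposability of $J_{r,s}$: if a polarized copy of $I_{1,1}^u$ sat inside $H_{r,s}$, its interaction with the summand $J_{r,s}$ would force a nontrivial direct-sum decomposition of $J_{r,s}$, which is impossible.
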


\begin{proof}
If $r=s$, this is immediate, since $H_{r,r}$ is indecomposable by Lemma
\ref{lemirs} and yet a polarized superspecial factor of positive
rank would induce a factorization by Lemma \ref{lemdecompbt1}.

Now suppose $r\not = s$. 
The argument used in the classification of polarizations on superspecial
$p$-divisible groups in \cite[Section 6.1]{LO} shows that for some $u \in
\st{1,2}$, there exists an inclusion
$\iota:I_{1,1}^u \hookrightarrow H_{r,s}$ with $G := \iota(I_{1,1}^u)$
polarized.  If $G$ is contained in either $J_{r,s}$ or $J_{s,r}$ (and
in particular if $u=1$), then we may argue as before.  Otherwise,
consider the sum of $G$ and $J_{r,s}$ inside $H_{r,s}$, which is {\em
  not} direct since $G\cap J_{r,s} \simeq I_{1,1}$ is nonempty.  By
Lemma \ref{lemdecompbt1}, $G$ has a complement $K$ in $G+J_{r,s}$.
Then $J_{r,s} \simeq I_{1,1}\oplus K$, contradicting the
indecomposability of $J_{r,s}$.
\end{proof}

\subsection{Superspecial ranks of abelian varieties}
\label{SssA}

If $A/k$ is a principally polarized abelian variety, we define its
superspecial rank to be that of its $p$-torsion group scheme;
$s(A) = s(A[p])$.  Lemma \ref{lemsanda} gives constraints between the
$p$-rank, $a$-number, and superspecial rank of $A$.  It turns out that
these are the only constraints on $f$, $a$ and $s$:

\begin{proposition}
Given integers $g,f,a,s$ such that $0 \leq s < a < g-f$, 
there exists a principally polarized abelian variety $A/k$ of dimension $g$ with $p$-rank $f$, $a$-number $a$ and superspecial rank $s$.
\end{proposition}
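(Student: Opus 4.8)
\emph{Reduction to a core piece.} The plan is to assemble $A$ from elliptic curves and one suitably chosen abelian variety, using the additivity of the $p$-rank, $a$-number and superspecial rank, \eqref{eqfadditive} and \eqref{eqsadditive}. Set $d = g-f-s$ and $b = a-s$; the hypotheses $0\le s<a<g-f$ translate into $1\le b\le d-1$ and $d\ge 2$. A product of $f$ ordinary elliptic curves has $p$-torsion $(\ZZ/p\oplus\mmu_p)^f$, with invariants $(p\text{-rank},a,s)=(f,0,0)$, and a product of $s$ supersingular elliptic curves has $p$-torsion $I_{1,1}^s$, with invariants $(0,s,s)$. So it suffices to produce a principally polarized abelian variety $B_0$ of dimension $d$ with $p$-rank $0$, $a$-number $b$, and superspecial rank $0$: then for $A := E_1\times\cdots\times E_f\times E'_1\times\cdots\times E'_s\times B_0$ (with ordinary $E_i$, supersingular $E'_j$, and the product principal polarization) one has $A[p]\simeq (\ZZ/p\oplus\mmu_p)^f\oplus I_{1,1}^s\oplus B_0[p]$ as pqp $\bt_1$ group schemes, and \eqref{eqfadditive}, \eqref{eqsadditive} give $f(A)=f$, $a(A)=s+b=a$ and $s(A)=s+0=s$.

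\emph{Finding the right $p$-torsion by counting.} To construct $B_0$ I would first locate a pqp $\bt_1$ group scheme with the required invariants. By \cite[9.4, 12.3]{O:strat}, the pqp $\bt_1$ group schemes of rank $p^{2d}$ with $p$-rank $0$ and $a$-number $b$ correspond bijectively to the Ekedahl--Oort types $[\nu_1,\dots,\nu_d]$ with $\nu_1=0$, $\nu_{i+1}-\nu_i\in\{0,1\}$ and $\nu_d=d-b$, and there are exactly $N(d,b):=\binom{d-1}{b-1}$ of these (choose which $d-b$ of the $d-1$ steps go up). If such a group scheme $G$ has positive superspecial rank, then $G$ contains a polarized copy of $I_{1,1}$, so Lemma \ref{lemdecompbt1} gives $G\simeq I_{1,1}\oplus G'$ with $G'$ a pqp $\bt_1$ group scheme of rank $p^{2(d-1)}$, $p$-rank $0$ and $a$-number $b-1$; since $G$ is recovered from $G'$ as $I_{1,1}\oplus G'$, distinct such $G$ give distinct $G'$, so the number of such $G$ is at most $N(d-1,b-1)=\binom{d-2}{b-2}$. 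As $1\le b\le d-1$, Pascal's identity yields
\[
N(d,b)-N(d-1,b-1) \;=\; \binom{d-1}{b-1}-\binom{d-2}{b-2} \;=\; \binom{d-2}{b-1} \;\ge\; 1,
\]
so there exists a pqp $\bt_1$ group scheme $B_1$ of rank $p^{2d}$ with $p$-rank $0$, $a$-number $b$, and superspecial rank $0$. (For $b=1$ this just recovers the statement of Lemma \ref{lemsshrs0} that $H_{d,d}$ has superspecial rank $0$.)

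\emph{Realization and the main obstacle.} Finally I would invoke Oort's theorem \cite{O:strat} that every Ekedahl--Oort stratum in the moduli space of $d$-dimensional principally polarized abelian varieties is nonempty: there is a principally polarized $B_0$ of dimension $d$ with $B_0[p]\simeq B_1$, and then $A$ built as above has $p$-rank $f$, $a$-number $a$ and superspecial rank $s$. The binomial count and the additivity bookkeeping are routine; the one substantial input I am borrowing is the nonemptiness of Ekedahl--Oort strata, and that is precisely the step that would be the main obstacle if one tried to prove the proposition without quoting it. (When $p=2$ one replaces \cite[9.4, 12.3]{O:strat} and Oort's nonemptiness result by their counterparts in \cite[9.2, 9.5, 12.2]{O:strat}; the combinatorics is unchanged.)
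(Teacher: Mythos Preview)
Your proof is correct, and the overall architecture---reduce via additivity to a ``core'' pqp $\bt_1$ group scheme of rank $p^{2d}$ with $p$-rank $0$, $a$-number $b$ and superspecial rank $0$, then invoke the nonemptiness of Ekedahl--Oort strata \cite[Theorem 1.2]{O:strat}---matches the paper exactly, with $d=g_1$ and $b=a_1$. The genuine difference is in how the core piece is produced. The paper constructs it explicitly: the word $w=F^{d-b+1}(VF)^{b-1}V^{d-b+1}$ is simple and symmetric of length $2d$, hence determines a canonically principally quasipolarized $\bt_1$ group scheme; one reads off directly that it has $a$-number $b$ (there are $b$ generators) and superspecial rank $0$ (since $d-b+1\ge 2$). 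Your argument is instead a pigeonhole count: there are $\binom{d-1}{b-1}$ Ekedahl--Oort types with $p$-rank $0$ and $a$-number $b$, and the surjection $G'\mapsto I_{1,1}\oplus G'$ shows that at most $\binom{d-2}{b-2}$ of them have positive superspecial rank, leaving at least $\binom{d-2}{b-1}\ge 1$ with superspecial rank $0$. (Your phrasing ``distinct $G$ give distinct $G'$'' is a bit oblique; what you are really using is that this map is surjective onto the set of $G$ with $s(G)>0$, which immediately gives the desired upper bound.) The paper's route is constructive and pins down a specific Dieudonn\'e module; your counting route is nonconstructive but pleasantly short and actually proves a little more, namely that there are at least $\binom{d-2}{b-1}$ such types.
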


\begin{proof}
By \cite[Theorem 1.2]{O:strat}, 
it suffices to show that there exists a pqp ${\rm BT}_1$ group scheme $G$ of rank $p^{2g}$ 
with $p$-rank $f$, $a$-number $a$ and superspecial rank $s$.
Set 
\[g_1=g-f-s, \ {\rm and} \ a_1=a-s,\] 
and note that $a_1 \geq 1$ and $g_1-a_1 \geq 1$ by hypothesis.
Considering
\[G = (\ZZ/p \oplus \mmu_p)^f \oplus I_{1,1}^s \oplus B,\]
together with the product polarization, 
allows one to reduce to the case of finding 
a pqp ${\rm BT}_1$ group scheme $B$ of rank $p^{2g_1}$ with $p$-rank $0$, $a$-number $a_1$ and superspecial rank $0$.
This is possible as follows.

Consider the word $w$ in $F$ and $V$ given by 
\[w=F^{g_1-a_1+1}(VF)^{a_1-1}V^{g_1-a_1+1}.\]
Then $w$ is simple and symmetric with length $2g_1$, and thus the corresponding $\bt_1$ group scheme admits a canonical principal quasipolarization \cite[9.11]{O:strat}.
Let $L_1, \ldots, L_{2g_1} \in \{F, V\}$ be such that $w=L_1 \cdots L_{2g_1}$. 
Consider variables $z_1, \ldots, z_{2g_1}$ with $z_{2g_1+1}=z_1$.
As in \cite[Section 9.8]{O:strat}, the word $w$ defines the structure of a Dieudonn\'e module on $N_w=\oplus_{i} k \cdot z_i$ as follows:
if $L_i=F$, let $F(z_i)=z_{i+1}$ and $V(z_{i+1})=0$; if $L_i=V$, let $V(z_{i+1})=z_i$ and $F(z_i)=0$.

The $a$-number is the number of generators for $N_w$ as an $\EE$-module.
By construction, $N_w$ has $a$-number $a_1$.  Since $g_1-a_1+1 \geq 2$, then $N_w$ has superspecial rank $0$.
\end{proof}


We now focus on supersingular abelian varieties

\begin{lemma} \label{sswiths=0}
For every $g \geq 2$ and prime $p$, a generic supersingular principally polarized abelian variety 
of dimension $g$ over $k$ has superspecial rank $0$.
\end{lemma}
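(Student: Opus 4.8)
The plan is to show that the generic point of the supersingular locus $\mathcal{S}_g$ in $\mathcal{A}_g$ corresponds to an abelian variety whose $p$-torsion group scheme has no polarized superspecial factor, by exhibiting a single supersingular principally polarized abelian variety with $a$-number $1$ (equivalently, Ekedahl-Oort type $[0,\ldots,0,1]$) and invoking irreducibility-type information about the supersingular locus. First I would recall that the supersingular locus $\mathcal{S}_g \subset \mathcal{A}_g$ has dimension $\lfloor g^2/4 \rfloor$ and that the superspecial points form a proper closed (indeed zero-dimensional) subset; more to the point, I would use that the generic supersingular abelian variety of dimension $g \ge 2$ has $a$-number exactly $1$. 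Granting this, Lemma \ref{lemsanda}(a) gives $s \le a = 1$, and so it remains only to rule out $s = 1$, i.e., to show the generic supersingular $A$ has no polarized superspecial factor of rank $p^2$.

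The key step is the following dichotomy. If $s(A) = 1$ then by Lemma \ref{lemsplitss} (or Lemma \ref{Lpulloffs}) we get a decomposition $A[p] \simeq I_{1,1} \oplus K$ of pqp $\bt_1$ group schemes with $K$ of rank $p^{2(g-1)}$, $p$-rank $0$, and $a$-number $a - 1 = 0$. But a $\bt_1$ group scheme of $a$-number $0$ has $a$-number zero, hence is a successive extension involving $\mu_p$ and $\ZZ/p$; having $p$-rank $0$ forces $K = 0$, so $g = 1$, contradicting $g \ge 2$. Thus for $g \ge 2$, any $A$ with $a(A) = 1$ automatically has $s(A) = 0$. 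So the proposition reduces entirely to the statement that a generic supersingular principally polarized abelian variety of dimension $g \ge 2$ has $a$-number $1$.

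For that reduction I would argue as follows: the $a$-number is upper semicontinuous on $\mathcal{A}_g$, so $\{A : a(A) \ge 2\}$ is closed, and it suffices to produce one supersingular principally polarized $A$ of dimension $g$ with $a(A) = 1$ lying in the same irreducible component of $\mathcal{S}_g$ as the generic point — or, more simply, to quote that the supersingular locus is known to contain points of every $a$-number between $1$ and $g$ and that $a = 1$ is the generic value on each component (this is classical, going back to Li–Oort \cite{LO}, whose stratification of $\mathcal{S}_g$ by $a$-number has the $a=1$ stratum open and dense). Alternatively, one can construct an explicit such $A$: take the $\bt_1$ group scheme $G$ with Ekedahl-Oort type $[0,0,\ldots,0,1]$ — equivalently the Dieudonné module $\EE/\EE(F^g + V^g)$ type, i.e.\ $H_{g,g}$ in the notation of Lemma \ref{lemsshrs0} — which is supersingular with $a$-number $1$, lift it to a principally polarized abelian variety via \cite[Theorem 1.2]{O:strat}, and note that such an abelian variety can be chosen supersingular since the Ekedahl-Oort stratum it defines meets the supersingular locus.

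The main obstacle is the interface between the $p$-torsion (Ekedahl-Oort) world and the isogeny (Newton polygon) world: producing a group scheme of $a$-number $1$ and $p$-rank $0$ is easy, but one must ensure it is realized by a genuinely \emph{supersingular} abelian variety, not merely one of $p$-rank $0$. For $g \ge 3$ these conditions genuinely diverge, so I would lean on the Li–Oort description of the supersingular locus rather than a naive lifting argument: their work shows each irreducible component of $\mathcal{S}_g$ has a dense open subset on which $a = 1$, and combined with the reduction above — that $a = 1$ together with $g \ge 2$ forces $s = 0$ — this yields the claim. I would also remark that Lemma \ref{lemsshrs0} already gives the flavor of the final computation: the generic supersingular $p$-divisible group looks like $H_{g,g}$, whose $\bt_1$-truncation has superspecial rank zero despite containing an unpolarized copy of $I_{1,1}$.
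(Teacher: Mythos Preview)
Your proof is correct and follows the same route as the paper: both cite Li--Oort \cite[Section 4.9]{LO} for the fact that the generic supersingular principally polarized abelian variety has $a$-number $1$, and then deduce $s = 0$ from $g \ge 2$. Your deduction---that $s = 1$ would, via Lemma \ref{lemsplitss}, force a pqp complement $K$ with $p$-rank $0$ and $a$-number $0$, hence $K = 0$ and $g = 1$---is precisely the argument behind Lemma \ref{lemsanda}(c), which you could simply have invoked; the paper instead identifies the Dieudonn\'e module as $M_{g,g}$ and applies Lemma \ref{lemsshrs0}. One small slip: under the conventions of Section \ref{Seotype}, the Ekedahl--Oort type of $\EE/\EE(F^g + V^g)$ is $[0,1,\ldots,g-1]$, not $[0,0,\ldots,0,1]$ (the latter has $a$-number $g-1$, since $a = g - \nu_g$).
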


\begin{proof}
A generic supersingular principally polarized abelian variety has $p$-rank $0$ and $a$-number $1$ \cite[Section 4.9]{LO}. 
This forces its Ekedahl-Oort type to be $[0,1, \ldots, g-1]$,  its Dieudonn\'e module to be 
$M_{g,g}$, and its superspecial rank to be zero (Lemma \ref{lemsshrs0}) since $g \ge 2$.
\end{proof}
  
It is not difficult to classify the values of the supersingular rank which occur for supersingular abelian varieties.

\begin{proposition} \label{Pexists}
For every $g \geq 2$ and prime $p$, there exists a supersingular principally polarized abelian variety of dimension $g$ over $k$ 
with superspecial rank $s$ if and only if $0 \leq s \leq g-2$ or $s=g$.
\end{proposition}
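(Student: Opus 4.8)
The plan is to prove the two directions separately: the "only if" follows from the general constraint $s(A) \le a(A)$ together with the fact that, for supersingular $A$, $s(A) = g$ exactly when $A$ is superspecial; the "if" requires constructing, for each admissible $s$, a supersingular principally polarized abelian variety with superspecial rank exactly $s$.

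\textbf{The "only if" direction.} Suppose $A$ is supersingular of dimension $g$ with $p$-rank $f = 0$. If $a(A) = g$ then $A$ is superspecial, so $s(A) = g$ by Lemma \ref{lemsanda}(b). If $a(A) < g$, then Lemma \ref{lemsanda}(c) (with $f = 0$) gives $s(A) < a(A) \le g-1$, hence $s(A) \le g-2$. This rules out $s = g-1$ and establishes that the only possible values are $0 \le s \le g-2$ or $s = g$.

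\textbf{The "if" direction.} For $s = g$, take a superspecial principally polarized abelian variety of dimension $g$, which exists by the discussion following the definition of superspecial. For $0 \le s \le g-2$, the idea is to build $A$ as an isogeny-supersingular variety by exhibiting an appropriate pqp $\bt_1$ group scheme $G$ of rank $p^{2g}$ and $p$-rank $0$, then invoking \cite[Theorem 1.2]{O:strat} (as in Proposition \ref{Pexists}'s predecessor) to realize it — but one must check that the resulting abelian variety can be chosen supersingular, not merely of $p$-rank $0$. The cleanest route is: take a supersingular principally polarized abelian variety $A_0$ of dimension $g - s$ with superspecial rank $0$ (which exists for $g - s \ge 2$ by Lemma \ref{sswiths=0}, using that the generic such object has $a$-number $1$ and Dieudonn\'e module $M_{g-s,g-s}$), take $A_1$ a product of $s$ supersingular elliptic curves (superspecial of dimension $s$, so supersingular with $s(A_1[p]) = s$), and set $A = A_0 \times A_1$ with the product principal polarization. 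Then $A$ is supersingular of dimension $g$, and by additivity of superspecial rank \eqref{eqsadditive} applied to $A[p] \simeq A_0[p] \oplus A_1[p]$, we get $s(A) = s(A_0[p]) + s(A_1[p]) = 0 + s = s$. This handles all $0 \le s \le g-2$; note when $s = 0$ this is just Lemma \ref{sswiths=0} itself.

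\textbf{The main obstacle} is verifying that $s(A_0 \times A_1) = s(A_0[p]) + s(A_1[p])$ — i.e., that a polarized superspecial factor of the product does not "mix" the two factors in a way that produces extra copies of $I_{1,1}$ beyond what additivity predicts. But this is exactly the content of the additivity relation \eqref{eqsadditive}, which was already established from the simplicity of $I_{1,1}$ in the category of $\bt_1$ group schemes, so there is in fact no obstacle here; the one genuine point requiring care is confirming that Lemma \ref{sswiths=0} really does supply a supersingular — not merely $p$-rank $0$ — example in dimension $g - s \ge 2$, which it does since it speaks of generic supersingular principally polarized abelian varieties. A secondary point is the boundary case $g - s = 1$, i.e. $s = g-1$, which the hypothesis correctly excludes: a supersingular abelian surface with a dimension-$1$ non-superspecial supersingular "complement" does not exist because any supersingular elliptic curve is superspecial, so the construction degenerates exactly where the statement says it should.
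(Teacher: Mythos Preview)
Your proof is correct and follows essentially the same approach as the paper: for the construction, both you and the paper take $A = E^s \times A_1$ with $A_1$ a generic supersingular principally polarized abelian variety of dimension $g-s$ (having $a$-number $1$ and hence superspecial rank $0$), invoking additivity \eqref{eqsadditive}. For the impossibility of $s=g-1$, the paper argues directly that the complement $B$ in Lemma~\ref{Lpulloffs} would be a local-local pqp $\bt_1$ group scheme of rank $p^2$ with $s(B)=0$, which forces $B \simeq I_{1,1}$, a contradiction; your route via Lemma~\ref{lemsanda}(c) is an equivalent repackaging of the same fact.
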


\begin{proof}
It is impossible for the superspecial rank to be $g-1$ since there are no local-local 
pqp ${\rm BT}_1$ group schemes of rank $p^2$ other than $I_{1,1}$. 

For the reverse implication, recall that there exists a supersingular principally polarized abelian variety $A_1/k$ of dimension $g-s$ with $a=1$. 
Its Dieudonn\'e module is $M_{g-s,g-s}$.  In particular, $s(A_1)=0$ as long as $s \le g-2$ (Lemma \ref{lemsshrs0}).
Let $E$ be a supersingular elliptic curve.
Then $A=E^s \times A_1$, together with the product polarization, is a supersingular principally polarized 
abelian variety over $k$ with dimension $g$ and $s(A)=s$.
\end{proof}

\begin{example}
Let $A/k$ be a supersingular principally polarized abelian variety of dimension $3$.  
Then the $a$-number $a=a(A)$ satisfies $1 \leq a \leq 3$.
\begin{alphabetize}
\item{If $a=1$,} then $A[p] \simeq J_{3,3}$,  which has superspecial rank $s=0$.
\item{If $a=2$,} then $A[p^\infty] \simeq \til G_{1,1} \times \til Z$ where $\til Z$ is supergeneral of height $4$ and $a(\til Z)=1$ \cite{odaoort}.  Then $s(\til Z[p]) = 0$ (Lemma \ref{lemsanda}(c)) and thus $s(A)=1$.
\item{If $a=3$,} then $A$ has superspecial rank $s=3$.
\end{alphabetize}
\end{example}

\subsection{Application of superspecial rank to Selmer groups} \label{Sselmer}

Here is another motivation for studying the superspecial rank of Jacobians.
The superspecial rank equals the rank of the Selmer group associated with a particular isogeny 
of function fields in positive characteristic.
Let $K$ be the function field of a smooth projective connected curve $X$ over $k$.
Let ${\mathcal E}$ be a constant supersingular elliptic curve over $K$.
Consider the multiplication-by-$p$ isogeny $f=[p]: {\mathcal E} \to {\mathcal E}$ of abelian varieties over $K$.

Recall the Tate-Shafarevich group
\[\Sha(K, {\mathcal E})_f={\rm Ker}(\Sha(K, {\mathcal E}) \stackrel{f}{\to} \Sha(K, {\mathcal E})),\] where 
\[\Sha(K,{\mathcal E})={\rm Ker}(H^1(K, {\mathcal E}) \to \prod_{v} H^1(K_v, {\mathcal E}))\]
and $v$ runs over all places of $K$.
The Selmer group ${\rm Sel}(K, f)$ is the subset of elements of $H^1(K, {\rm Ker}(f))$ whose restriction is in the
image of \[{\rm Sel}(K_v, f) = {\rm Im}({\mathcal E}(K_v) \to H^1(K_v, {\rm Ker}(f))),\] for all $v$.
There is an exact sequence 
\[0 \to {\mathcal E}(K)/f({\mathcal E}(K)) \to {\rm Sel}(K,f) \to \Sha(K, {\mathcal E})_f \to 0.\]

Here is an earlier result, rephrased using the terminology of this paper, which provides motivation for studying the superspecial rank.

\begin{theorem} (Ulmer)
The rank of ${\rm Sel}(K, [p])$ is the superspecial rank of ${\rm Jac}(X)$ \cite[Proposition 4.3]{Ulmer}.
\end{theorem}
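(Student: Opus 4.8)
The plan is to give a cohomological computation of $\mathrm{Sel}(K,[p])$ identifying it with a space of global sections that matches the definition of superspecial rank. First I would unwind the definitions: since $\mathcal{E}$ is a constant supersingular elliptic curve, $\mathrm{Ker}([p]) = \mathcal{E}[p]$ is the constant group scheme over $K$ associated with the $\mathrm{BT}_1$ group scheme $I_{1,1}$ over $k$, and its Dieudonn\'e module is $M_{1,1} = \EE/\EE(F+V)$. The Selmer group $\mathrm{Sel}(K,[p]) \subset H^1(K, \mathcal{E}[p])$ is cut out by local conditions that are everywhere unramified (and moreover, because $\mathcal{E}$ is constant and $[p]$ is purely inseparable, the local images $\mathrm{Sel}(K_v,[p]) = \mathrm{Im}(\mathcal{E}(K_v) \to H^1(K_v, \mathcal{E}[p]))$ should be computed to be exactly the ``geometric'' part). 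Thus the key step is to reinterpret $\mathrm{Sel}(K,[p])$ as $H^1$ of the curve $X$ with coefficients in a suitable flat sheaf, namely the group scheme $\mathcal{E}[p] = I_{1,1}$ spread out over $X$.

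Next I would use flat (fppf) cohomology over $X$: the syntomic/fppf sheaf $I_{1,1}$ on $X$ fits into the exact sequence $0 \to \aalpha_p \to I_{1,1} \to \aalpha_p \to 0$ from \eqref{eqdefi11}, and $H^*_{\mathrm{fl}}(X, \aalpha_p)$ is computed by the Cartier/Dieudonn\'e formalism in terms of $H^*_{\mathrm{dR}}(X)$ or equivalently $H^1(X,\mathcal{O}_X)$ and $H^0(X,\Omega^1_X)$ together with the Cartier operator. The upshot — this is essentially Ulmer's observation and also appears in work on flat cohomology of $\aalpha_p$ and $I_{1,1}$ — is that $H^1_{\mathrm{fl}}(X, I_{1,1})$, intersected with the Selmer (unramified, locally-split) conditions, is naturally identified with the space of $\EE$-module maps $M_{1,1} \to \dieu_*(\mathrm{Jac}(X)[p]) \cong H^1_{\mathrm{dR}}(X)$, or dually with the largest $s$ so that $I_{1,1}^s$ embeds as a polarized factor. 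Because $\mathrm{Jac}(X)[p]$ carries a principal quasipolarization and $M_{1,1}$ is self-dual, any such embedding is automatically polarizable after splitting off its image using Lemma \ref{lemdecompbt1}, so the count of independent global homomorphisms is exactly the superspecial rank $s(\mathrm{Jac}(X))$.

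Concretely, the order of steps would be: (1) identify $\mathrm{Ker}([p])$ with the constant/spread-out group scheme $I_{1,1}$ over $X$ and rewrite the Selmer condition as a global fppf-cohomological condition; (2) compute the relevant local conditions at each place $v$, showing $\mathrm{Sel}(K_v,[p])$ is the unramified subspace, using that $\mathcal{E}$ is constant and supersingular so there are no nontrivial local points modulo $[p]$ beyond the geometric contribution; (3) identify the resulting global group with $\hom_{\EE}(M_{1,1}, H^1_{\mathrm{dR}}(X))$ via covariant Dieudonn\'e theory and the comparison $\dieu_*(\mathrm{Jac}(X)[p]) \cong H^1_{\mathrm{dR}}(X)$ recorded in the excerpt; (4) invoke Lemma \ref{lemdecompbt1} and the self-duality of $M_{1,1}$ to upgrade ``a superspecial factor'' to ``a polarized superspecial factor,'' concluding that the rank of $\mathrm{Sel}(K,[p])$ equals $s(\mathrm{Jac}(X)[p]) = s(\mathrm{Jac}(X))$.

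The main obstacle I anticipate is step (2): correctly pinning down the local Selmer conditions $\mathrm{Sel}(K_v,[p]) = \mathrm{Im}(\mathcal{E}(K_v)\to H^1(K_v,\mathcal{E}[p]))$ and checking they glue to exactly the condition that the cohomology class extends to a global flat sheaf on $X$ — i.e., that the local-at-$v$ images match the ``unramified'' classes for all $v$, including the subtlety that $[p]$ is inseparable so the usual Kummer-theory dimension counts must be replaced by their $\aalpha_p$/$I_{1,1}$ analogues. Once the local conditions are correctly identified, step (3) is a formal consequence of Dieudonn\'e theory, and step (4) is immediate from the lemmas already proved. Since the statement is attributed to Ulmer with a precise reference \cite[Proposition 4.3]{Ulmer}, in the paper itself it would suffice to cite that result and simply note the translation into the language of superspecial rank via the dictionary established in Section \ref{Sssrank}; a full reproof along the lines above is optional.
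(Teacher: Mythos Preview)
The paper does not prove this theorem at all: it is stated purely as a citation to Ulmer, introduced with the phrase ``an earlier result, rephrased using the terminology of this paper,'' and no proof environment follows. You correctly anticipated this in your final paragraph, so at the level of matching the paper your proposal is accurate.

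Your optional sketch of a full proof is reasonable in outline, but step (4) contains a genuine gap. You claim that because $\mathrm{Jac}(X)[p]$ is principally quasipolarized and $M_{1,1}$ is self-dual, any embedding $I_{1,1}\hookrightarrow \mathrm{Jac}(X)[p]$ is ``automatically polarizable'' and can be split off via Lemma~\ref{lemdecompbt1}. But Lemma~\ref{lemdecompbt1} requires the sub-group scheme to be \emph{already} polarized; and the paper's own discussion of unpolarized superspecial rank (Lemmas~\ref{lemirs} and~\ref{lemsshrs0}) exhibits pqp $\bt_1$ group schemes $H_{r,s}$ admitting embeddings of $I_{1,1}$ yet having (polarized) superspecial rank zero. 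Thus $\dim_k \hom_\EE(M_{1,1}, \dieu_*(\mathrm{Jac}(X)[p]))$ need not coincide with $s(\mathrm{Jac}(X))$, and the passage from ``independent embeddings'' to ``polarized superspecial factor'' requires further argument. Presumably this is where the actual content of Ulmer's computation enters, which is why the paper simply cites the reference rather than reproving it.
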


\section{Elliptic curve summands of abelian varieties} \label{Sdecompose}

Let $A/k$ be a principally polarized abelian variety of dimension $g$.
In this section, we define the elliptic rank of $A$ to be the maximum number of elliptic curves appearing in a decomposition of $A$ up to isomorphism.  
When $A$ has $p$-rank $0$, the 
elliptic rank is bounded by the superspecial rank, Proposition \ref{Psselliptic}.
Proposition \ref{Pssrank=ssE} states that the elliptic rank is the number of rank $2$ factors
in the $p$-divisible group $A[p^\infty]$ when $A$ is supersingular. 

\subsection{Elliptic rank}

\begin{definition}
The {\it elliptic rank} $e(A)$ of $A$ is
\begin{equation}
\label{eqdefe}
e(A):={\rm max} \{e \mid \iota: A \stackrel{\simeq}{\to}  A_1 \times (\times_{i=1}^e E_i)\},
\end{equation}
where $E_1, \ldots, E_e$ are elliptic curves, $A_1$ is an abelian variety of dimension $g-e$, 
and $\iota$ is an isomorphism of abelian varieties over $k$. 
\end{definition}
(We remind the reader that many ``cancellation problems'' for abelian varieties have negative answers \cite{shioda77}, and that the abelian variety $A_1$ in \eqref{eqdefe} is not necessarily unique.)

Here are some properties of the elliptic rank.

\begin{proposition} \label{Psselliptic}
If $A$ has $p$-rank $0$, then the elliptic rank is bounded by the superspecial rank: $e(A) \leq s(A)$.
\end{proposition}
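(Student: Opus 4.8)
The plan is to reduce the statement about abelian varieties to the additivity of superspecial rank on $p$-torsion group schemes. Suppose $A$ has $p$-rank $0$ and elliptic rank $e = e(A)$, so there is an isomorphism $\iota : A \xrightarrow{\ \simeq\ } A_1 \times \prod_{i=1}^e E_i$ of abelian varieties over $k$, where each $E_i$ is an elliptic curve and $A_1$ has dimension $g-e$. First I would observe that, since $A$ has $p$-rank $0$ and the $p$-rank is additive, each factor $E_i$ has $p$-rank $0$, hence each $E_i$ is a supersingular elliptic curve; in particular $E_i[p] \simeq I_{1,1}$ by Example \ref{exi11}.

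Next I would pass to $p$-torsion group schemes. The isomorphism $\iota$ induces an isomorphism of group schemes
\[
A[p] \;\simeq\; A_1[p] \;\oplus\; \bigoplus_{i=1}^e E_i[p] \;\simeq\; A_1[p] \;\oplus\; I_{1,1}^e .
\]
A point that needs a little care is that this is an isomorphism of \emph{principally quasipolarized} $\bt_1$ group schemes: the principal polarization on $A$ need not respect the product decomposition of $A$ coming from $\iota$. However, for the purpose of bounding $s(A)$ it is enough to exhibit one polarized superspecial factor of $A[p]$ of rank $2e$, and $I_{1,1}^e$ sitting inside $A[p]$ as above is a superspecial factor; I would either invoke Lemma \ref{lemdecompbt1} (noting that a superspecial factor, once shown to be polarizable, splits off as a pqp summand) or, more simply, use the additivity \eqref{eqsadditive} together with the fact that $\iota$ can be taken to be a polarized isomorphism after adjusting the polarization on the product side — since the product of supersingular elliptic curves with its product polarization realizes a polarized $I_{1,1}^e$. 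Either way one gets $s(A[p]) \geq e$, i.e. $s(A) \geq e(A)$.

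Concretely, the cleanest route I would take: since each $E_i$ is supersingular, $A$ is isogenous to $A_1 \times E^e$ and $A_1$ has $p$-rank $0$; the summand $\prod_{i=1}^e E_i$ carries its own product principal polarization, and $I_{1,1}^e = \bigoplus E_i[p]$ is a polarized superspecial factor of $(\prod E_i)[p]$. Transporting along the Dieudonné equivalence $\dieu_*$ and using that an isomorphism of abelian varieties carries $p$-torsion to $p$-torsion compatibly with the quasipolarizations (after the standard adjustment when $p=2$ noted in the paper), the submodule $M_{1,1}^e \subseteq \dieu_*(A[p])$ is a polarized superspecial factor of rank $2e$. By definition of superspecial rank, $s(A) \geq e$.

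The main obstacle is the polarization bookkeeping: one must be sure that the superspecial factor $I_{1,1}^e$ produced from the elliptic curve summands is \emph{polarized} in the sense of Section \ref{Sssdef}, not merely a subgroup scheme. I expect this to be handled by restricting attention to the sub-abelian-variety $\prod_{i=1}^e E_i$, which inherits a principal polarization (the product polarization) under which $\bigoplus E_i[p] \simeq I_{1,1}^e$ is manifestly polarized, and then noting that the inclusion $\prod E_i \hookrightarrow A$ need not be polarized but the \emph{image} of $I_{1,1}^e$ in $A[p]$ can be shown polarized via Lemma \ref{lemdecompbt1} applied to the pqp structure on $A[p]$ — or by appealing directly to the additivity relation \eqref{eqsadditive} once one knows $A$ itself decomposes as a pqp abelian variety up to the relevant identifications. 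Everything else is immediate from the additivity of the $p$-rank (forcing supersingularity of the $E_i$) and the computation $E_i[p] \simeq I_{1,1}$.
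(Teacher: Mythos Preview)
Your argument is the paper's: $p$-rank zero forces each $E_i$ to be supersingular, so $E_i[p]\simeq I_{1,1}$ and $\dieu_*(A[p])$ acquires $M_{1,1}^e$ as a summand, whence $s(A)\ge e$. The paper's proof is two sentences and does not mention the polarization issue you flag.

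Your worry is real, and your proposed fixes do not quite close it: Lemma~\ref{lemdecompbt1} \emph{assumes} the sub-$\bt_1$ group scheme is already polarized, which is the very point at issue, and the product polarization on $\prod E_i$ need not agree with the restriction of the quasipolarization coming from $A$. The cleanest resolution---left implicit by the paper---is that by the Ekedahl--Oort classification (Section~\ref{Seotype} and \cite[9.4,~12.3]{O:strat}) the pqp-isomorphism type of a symmetric $\bt_1$ group scheme over $k$ is determined by its unpolarized isomorphism type; hence the unpolarized splitting $A[p]\simeq A_1[p]\oplus I_{1,1}^e$ already pins down $s(A[p])$, and one may compute it using the product pqp structure on the right-hand side, where $I_{1,1}^e$ is visibly a polarized superspecial factor and \eqref{eqsadditive} applies.
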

  
\begin{proof}
If $A$ has $p$-rank $0$, then the elliptic curves $E_1, \ldots, E_e$ in a maximal decomposition of $A$ are supersingular.
Each supersingular curve in the decomposition contributes a factor of $\EE/\EE(F+V)$ to the Dieudonn\'e module $\dieu_*(A[p])$.
\end{proof}

The proof of Proposition \ref{Pexists} shows that, 
for every $g \geq 2$ and prime $p$, there exists a supersingular principally polarized abelian variety of dimension $g$ over $k$ 
with elliptic rank $e$ if and only if $0 \leq e \leq g-2$ or $e=g$.

\begin{remark} \label{Rabssimple}
It is clear that $e(A) =0$ if $A$ is simple and ${\rm dim}(A)>1$.  
Recall from \cite{lenstraoort} that there exists a simple
abelian variety $A$ with formal isogeny type $\eta$, for each non-supersingular Newton polygon $\eta$.
It follows from Proposition \ref{Psselliptic} that there exist abelian varieties $A$ with $s(A) > 0$ and $e(A) =0$ for all dimensions $g \geq 4$.
\end{remark}

\subsection{Superspecial rank for $p$-divisible groups}

We briefly sketch a parallel version of superspecial rank in the category of
$p$-divisible groups, rather than $p$-torsion group schemes. Many of
the notions and results in Section \ref{Sssdef} generalize to
truncated Barsotti-Tate groups of arbitrary level, and indeed to
Barsotti-Tate, or $p$-divisible, groups.

Let $\til G$ be a pqp $p$-divisible group, and
let $\til H \subseteq \til G$ be a sub-$p$-divisible group.  We say that
$\til H$ is polarized if the principal quasipolarization on $\til G$
restricts to one on $\til H$.  Lemma \ref{lemdecompbt1} admits an
analogue for $p$-divisible groups; for such an $\til H$, there exists a
pqp complement $\til K$ such that $\til G \simeq
\til H \oplus \til K$.

Let
$\til I_{1,1}$ be the $p$-divisible group whose Dieudonn\'e module is
\[
\til M_{1,1} = \dieu_*(\til I_{1,1}) \simeq \til\EE/\til\EE(F+V);
\]
then $\til I_{1,1}[p] \simeq I_{1,1}$. 

With this preparation, we define the superspecial rank $\til s(\til
G)$ of a pqp $p$-divisible group $\til G$ as the largest value of $s$
for which there exists a sub-pqp $p$-divisible group of $\til G$
isomorphic to $\til I_{1,1}^{s}$.

Since a decomposition of a $p$-divisible group induces a decomposition
on its finite levels, it follows that
\begin{equation}
\til s(\til G) \le s(\til G[p]).
\end{equation}
Similarly, if $A/k$ is a principally polarized abelian variety, then any
decomposition of $A$ induces a decomposition of its $p$-divisible
group.  So if $A$ has $p$-rank $0$, then
\begin{equation}
e(A) \le \til s(A[p^\infty]).
\end{equation}

We thank Oort for suggesting the following result:

\begin{proposition}\label{Pssrank=ssE}
Let $A/k$ be a supersingular principally polarized abelian variety.
Then
\[
e(A) = \til s(A[p^\infty]).
\]
\end{proposition}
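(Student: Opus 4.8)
The inequality $e(A)\le \til s(A[p^\infty])$ was noted just before the statement, using that a supersingular abelian variety has $p$-rank $0$; so it remains to prove $\til s(A[p^\infty])\le e(A)$. Write $s=\til s(A[p^\infty])$ and assume $s\ge 1$. By the $p$-divisible group analogue of Lemma \ref{lemdecompbt1} recorded above, there is a decomposition
\[
A[p^\infty]\simeq \til I_{1,1}^{s}\oplus \til K
\]
of pqp $p$-divisible groups. The plan is to realize this as the shadow of a decomposition of $A$: I will produce an abelian subvariety $B\subseteq A$ that is a polarized direct factor and is isomorphic to a product of $s$ supersingular elliptic curves; then $A\simeq B\times B^{\perp}$, where $B^{\perp}$ is the complementary abelian subvariety, so $e(A)\ge s$.

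Fix a supersingular elliptic curve $E$ and give $E^{s}$ the product principal polarization $\lambda$; then $E^{s}[p^\infty]\simeq \til I_{1,1}^{s}$, equipped with its principal quasipolarization. The heart of the argument is to construct a polarized embedding $h\colon (E^{s},\lambda)\hookrightarrow (A,\phi)$ of principally polarized abelian varieties, i.e.\ a homomorphism with $h^{\vee}\circ\phi\circ h=\lambda$. Such an $h$ is governed entirely by local data. Because $A$ is supersingular it is isogenous to a power of $E$, so $\hom(E^{s},A)$ has maximal rank; and since a supersingular abelian variety is rigid for $\hom$ — the relevant $\hom$-scheme is unramified, hence étale, so $\hom(E^{s},A)$ is unchanged upon specializing $A$ to a supersingular abelian variety over $\overline{\FF}_{p}$ — Tate's theorem \cite{tate:endo} together with its $p$-adic analogue (due to de Jong) identifies the profinite completion of $\hom(E^{s},A)$ with $\hom(T_{\ell}E^{s},T_{\ell}A)$ at each $\ell\neq p$ and with the Dieudonné $\hom$ from $E^{s}[p^\infty]$ to $A[p^\infty]$ at $p$. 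At $p$, the hypothesis $\til s(A[p^\infty])=s$ supplies a polarized embedding of $E^{s}[p^\infty]\simeq\til I_{1,1}^{s}$ onto the displayed direct factor of $A[p^\infty]$. At each $\ell\neq p$, the symplectic $\ZZ_{\ell}$-module $T_{\ell}A$ contains a hyperbolic direct summand of rank $2s$, so $E^{s}[\ell^\infty]$ embeds as a polarized direct factor of $A[\ell^\infty]$. These local polarized embeddings assemble into an adelic one.

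To descend from an adelic to an honest polarized embedding I would appeal to the mass computation used to classify polarizations on superspecial $p$-divisible groups in \cite[Section 6.1]{LO} — the same input invoked in the proof of Lemma \ref{lemsshrs0} — which shows that the local–global obstruction for superspecial objects is concentrated at $p$ and is therefore killed by the hypothesis. This yields $h\colon E^{s}\to A$ with $h^{\vee}\circ\phi\circ h=\lambda$. In particular $\ker h\subseteq\ker(h^{\vee}\phi h)=\ker\lambda=0$, so $h$ is a closed immersion; set $B=h(E^{s})\cong E^{s}$. Since $E$ is supersingular, $B$ is a product of $s$ supersingular elliptic curves. Moreover $\phi|_{B}=\lambda$ is principal, so the complementary abelian subvariety $B^{\perp}$ satisfies $B\cap B^{\perp}\cong\ker(\phi|_{B})=0$, and the addition map $B\times B^{\perp}\to A$ is an isomorphism (its kernel is trivial and the dimensions agree). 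Hence $e(A)\ge s$, as desired.

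The main obstacle is the content of the last two paragraphs: pinning down the correct form of Tate's theorem and its $p$-adic counterpart over a base field $k$ that need not be algebraic over $\FF_{p}$ — which forces the specialization argument and relies on the rigidity of $\hom$ in the supersingular locus — and then upgrading the adelic polarized embedding to a genuine one, which is precisely where the superspecial mass/class-number phenomenon of \cite[Section 6.1]{LO} is essential. An alternative route, which may be cleaner, is to run the argument inside the description of the supersingular locus by polarized flag type quotients of \cite{LO}, where a splitting $A\simeq E^{s}\times A_{1}$ corresponds visibly to a splitting of the flag type quotient $E^{g}\to A$ along the $\til I_{1,1}^{s}$-part of $A[p^\infty]$.
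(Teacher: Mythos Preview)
Your main argument has a genuine gap at the step where you invoke Tate's theorem and its $p$-adic analogue. These theorems require arithmetic input: Tate's theorem is for abelian varieties over finite fields, and de Jong's version is for function fields; neither applies directly over an algebraically closed field $k$ which may be transcendental over $\FF_p$. Your workaround---specialize $A$ to $\overline{\FF}_p$, apply Tate there, then lift---does not close the gap. While $\hom$-schemes are unramified and individual homomorphisms lift, the hypothesis you need, namely the pqp splitting $A[p^\infty]\simeq \til I_{1,1}^{s}\oplus \til K$, is a statement about the $p$-divisible group over $k$; it need not persist under specialization (the isomorphism type of a $p$-divisible group can jump, typically with $\til s$ increasing), so you cannot simply run Tate on the special fiber and then lift the resulting embedding back. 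Separately, the appeal to \cite[Section~6.1]{LO} for an ``adelic-to-honest'' descent is misplaced: that section concerns polarizations on superspecial $p$-divisible groups, not a local--global principle for embeddings of abelian varieties, and you have not identified an actual obstruction group that the reference would kill.

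The paper's proof avoids all of this by invoking a single structural result: Ogus's theorem \cite[Theorem~6.2]{ogusSS}, which says that for \emph{supersingular} abelian varieties the principally quasipolarized Dieudonn\'e module determines the abelian variety; hence the pqp splitting $\til M\simeq \til M_{1,1}^{\til s}\oplus \til N$ already yields $A\simeq E^{\til s}\times A_1$. No arithmetic or adelic machinery is needed. Your ``alternative route'' via polarized flag type quotients is in fact close to the paper's second argument (given as a Remark after the proof): choose an isogeny $E^g\to A$, analyze its kernel inside $\til I_{1,1}^g$, and use that $\End(\til I_{1,1})$ is a noncommutative PID so that elementary divisor theory places the kernel inside a complementary $\til I_{1,1}^{g-\til s}$; this transports to $E^g$ and gives the splitting of $A$. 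If you want to salvage your write-up, I would recommend replacing the Tate/adelic paragraph with either the Ogus citation or this elementary-divisor argument.
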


\begin{proof} 
Let $\til M$ be
the Dieudonn\'e module $\til M= \dieu_*(A[p^\infty])$, and let $E/k$ be a supersingular elliptic curve.  Since $A$ is
principally polarized, $\til M$ is principally quasipolarized.
Let $\til s = \til s(A[p^\infty])$. 
By the same proof as for Lemma \ref{lemdecompbt1},
there is a decomposition of pqp Dieudonn\'e modules
\begin{equation}
\label{eqdecomptilM}
\til M \simeq \til M_{1,1}^{\til s} \oplus \til N,
\end{equation}
where $\til N$ has superspecial rank zero.  By \cite[Theorem 6.2]{ogusSS}, since $\til M$
is supersingular, \eqref{eqdecomptilM} induces a corresponding
decomposition 
\begin{equation}
\label{eqdecompabvar}
A \simeq E^{\til s} \oplus A_1.
\end{equation}
where $A_1$ is a principally polarized abelian variety of dimension $g-\til{s}$ with $\til s(A_1[p^\infty]) = 0$.
Thus $e(A) \geq \til{s}$ and the result follows.
\end{proof}

\begin{remark}
In fact, it is not hard to give a direct proof that the existence of decomposition \eqref{eqdecomptilM} implies the existence of \eqref{eqdecompabvar}. Indeed, 
since $A$ is supersingular, there exists an isogeny $\psi:E^g \to A$, which
induces an isogeny of $p$-divisible groups $\psi[p^\infty]: \til
I_{1,1}^g \to A[p^\infty]$.   Let $H = \ker (\psi[p^\infty])$; it is a
finite group scheme, and is thus also a sub-group scheme of $E^g$.  Since $\End(\til I_{1,1})$ is a maximal order in
a division ring over $\ZZ_p$, it is a (noncommutative) principal
ideal domain (see also \cite[p.\ 335]{Li:ss}).  By the theory of
elementary divisors for such rings (e.g., \cite[Chapter 3, Theorem
18]{jacobsonrings}), there is an isomorphism $\til I_{1,1}^g \simeq
\til I_{1,1}^{\til s} \times \til I_{1,1}^{g-\til s}$ under which $H$
is contained in $0\times \til I_{1,1}^{g-\til s}$.  Since
$\End(E^g[p^n]) \simeq \End(\til I_{1,1}^g [p^n])$ for each $n \in \NN$, there is an
analogous decomposition $E^g \simeq E^{\til s} \times E^{g-\til s}$
under which $H$ is contained in $0\times E^{g-\til s}$.  Then $A =
E^g/N \simeq E^{\til s} \oplus A_1$, where $A_1$ is supersingular but has
superspecial rank zero.
\end{remark}

\subsection{An open question}

Consider a principally polarized abelian variety $A/k$.
By Remark \ref{Rabssimple}, if $A$ is not supersingular, then it can 
be absolutely simple ($e(A)=0$) and yet have positive superspecial rank ($s(A)>0$).  (Similarly, if $A$ admits ordinary elliptic curves as factors, then it is possible to have $e(A)>0$ while $s(A)=0$.)

However, if $A$ has $p$-rank $0$, there are {\em a priori} inequalities
\[
e(A) \le \til s(A[p^\infty]) \le s(A[p]).
\]
Proposition \ref{Pssrank=ssE} shows the first inequality is actually an equality when $A$ is supersingular.  
This leads one to ask the following:

\begin{question} \label{Qask}
\begin{enumerate}
\item If $A/k$ is supersingular, is $e(A)=s(A)$?
\item If $\til G$ is a supersingular pqp $p$-divisible group, 
is $\til{s}(\til{G})=s(\til{G}[p])$?
\end{enumerate}
\end{question}

The two parts of Question \ref{Qask} have the same answer by Proposition 
\ref{Pssrank=ssE}.
Here is one difficulty in answering this question.

\begin{remark} 
The $p$-divisible group $\til I_{1,1}$ is isomorphic (over $k$) to the $p$-divisible group $H_{1,1}$ introduced in \cite[5.2]{dejongoort}.  
Consequently, it is {\em minimal} in the sense of \cite[page 1023]{oortminimal}; 
if $\til M$ is any Dieudonn\'e module such that $(\til M \otimes_W k) \simeq M_{1,1}^{\oplus s}$, 
then there is an isomorphism $\til M \simeq \til M_{1,1}^{\oplus s}$.  

In spite of this, because of difficulties with extensions (see, e.g., \cite[Remark 3.2]{oortminimal}), 
one cannot immediately conclude that $\til{M}$ admits $\til M_{1,1}^{\oplus s}$ 
as a summand if $\til M/p \til{M}$ has superspecial rank $s$.  Indeed, Lemma \ref{lemsshrs0} indicates that an appeal to minimality alone is insufficient; any argument must make use of the principal quasipolarization.

\end{remark}

\section{Superspecial rank of supersingular Jacobians} \label{Sjacobian}

If $X/k$ is a (smooth, projective, connected) curve, its superspecial and elliptic ranks are those of its Jacobian:
$s(X)=s({\rm Jac}(X))$ and $e(X)=e({\rm Jac}(X))$.
In this section, we address the question of which superspecial ranks occur for Jacobians of (supersingular) curves.
First, recall that there is a severe restriction on the genus of a superspecial curve.  

\begin{theorem} \label{Tekedahl} (Ekedahl)
If $X/k$ is a superspecial curve of genus $g$, then $g \leq p(p-1)/2$ \cite[Theorem 1.1]{Ekedahl}, see also \cite{Baker}. 
\end{theorem}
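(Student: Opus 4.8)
The plan is to recall Ekedahl's argument, which combines the superspecial condition with the action of the de Rham cohomology and the Cartier operator on a curve. First I would note that if $X$ is superspecial of genus $g$, then $\operatorname{Jac}(X)[p] \simeq I_{1,1}^g$, so in Dieudonn\'e-module terms $H^1_{\mathrm{dR}}(X)$ (identified with $\dieu_*(\operatorname{Jac}(X)[p])$ via \cite{Oda}) satisfies $F = -V$ as operators, or equivalently the Cartier operator $\mathcal{C}$ on $H^0(X,\Omega^1_X)$ is identically zero (the $a$-number is $g$, so Frobenius acts as zero on $H^1(X,\mathcal{O}_X)$, and dually $\mathcal{C}=0$ on holomorphic differentials). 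This is the translation of ``superspecial'' into a concrete statement about differentials.

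Next I would exploit that $\mathcal{C}=0$ on $H^0(X,\Omega^1_X)$ means every holomorphic differential $\omega$ is of the form $\omega = dh/h \cdot (\text{something})$ — more precisely, $\mathcal{C}\omega=0$ forces $\omega = df$ for some rational function $f$ locally, and globally one uses the structure of the Cartier operator: $\mathcal{C}(f^p\omega) = f\,\mathcal{C}(\omega)$, and a differential killed by $\mathcal{C}$ is ``exact'' in the appropriate sense, meaning it lies in the image of $d$ on $p$-th powers. The key classical input (Ekedahl) is then a dimension count: one considers the map $H^0(X,\Omega^1_X) \to H^0(X, \Omega^1_X \otimes (\text{Frobenius twist}))$ or, in Baker's reformulation, one looks at the linear system of holomorphic differentials and the fact that for each such $\omega$, the divisor structure together with $\mathcal{C}\omega = 0$ forces $\omega$ to have a zero of order divisible by $p$ wherever... actually the cleanest route is via the map $V: H^0(X,\Omega^1) \to H^0(X,\Omega^1)^{(p^{-1})}$ (the Cartier operator) being zero, which means the $g$-dimensional space $H^0(X,\Omega^1)$ injects under $F^{-1}$-type considerations into a space one can bound.

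Concretely, I would follow this line: since $\mathcal{C} = 0$, for each nonzero $\omega \in H^0(X,\Omega^1_X)$ one can write $\omega = dz$ in terms of a rational function $z$ on $X^{(1/p)}$, and then $\omega$ pulled back appropriately gives a rational function $X \to \mathbb{P}^1$ whose differential is $\omega$; counting using Riemann-Hurwitz for this function (whose ramification is governed by the zeros of $\omega$, hence by the canonical divisor of degree $2g-2$) yields the bound. The precise count: a basis $\omega_1,\dots,\omega_g$ gives functions $f_i$ with $df_i = \omega_i^{1/p}$-ish, and the Wronskian / the fact that these $f_i$ together with $1$ span a space of functions with pole orders controlled by $p \cdot (\text{zeros of differentials})$ produces the inequality $g \le \binom{p}{2} = p(p-1)/2$. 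I would cite \cite[Theorem 1.1]{Ekedahl} and \cite{Baker} for the full details rather than reproduce the Riemann-Hurwitz bookkeeping.

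The main obstacle is making the passage from ``$\mathcal{C}=0$ on all holomorphic differentials'' to the explicit genus bound rigorous: one must correctly set up the auxiliary morphism to $\mathbb{P}^1$ (or the auxiliary linear system) attached to the $p$-th-power structure, and track how the degree $2g-2$ of the canonical class interacts with the factor of $p$ introduced by Frobenius. Since this is precisely Ekedahl's theorem, stated in the excerpt as a known result with references, the honest ``proof'' here is to reduce the superspecial hypothesis to the vanishing of the Cartier operator (immediate from the $a$-number being $g$, via the identification $\dieu_*(\operatorname{Jac}(X)[p]) \simeq H^1_{\mathrm{dR}}(X)$ recalled earlier) and then cite \cite{Ekedahl} and \cite{Baker}; I would not attempt to re-derive the Riemann-Hurwitz estimate from scratch.
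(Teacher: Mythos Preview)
The paper does not prove this theorem; it is stated as a known result with references to \cite[Theorem 1.1]{Ekedahl} and \cite{Baker}, and no argument is given. Your proposal, which sketches the idea behind Ekedahl's argument (superspecial $\Rightarrow$ Cartier operator vanishes on $H^0(X,\Omega^1_X)$, followed by a dimension/Riemann--Hurwitz count) and then defers to the same references, is therefore more than what the paper itself offers. That said, your sketch of the passage from $\mathcal{C}=0$ to the genus bound is vague in places (the ``$\omega = dz$'' and Wronskian remarks are gestures rather than arguments), so if you intend this as an actual proof rather than a citation you would need to tighten that part; if you intend it as a citation, a single sentence pointing to \cite{Ekedahl} and \cite{Baker} matches the paper exactly.
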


For example, if $p=2$, then the genus of a superspecial curve is at most $1$.
The Hermitian curve $X_p: y^p+y=x^{p+1}$ is a superspecial curve realizing the upper bound of Theorem \ref{Tekedahl}.

In Section \ref{Shyp}, we determine the superspecial ranks of all hyperelliptic curves in characteristic $2$.
We determine the superspecial rank of the Jacobians of Hermitian curves in Section \ref{Sherm}. In both cases, this gives an upper bound for the elliptic rank.

\subsection{Supersingular Jacobians} \label{SssJ}

Recall that a curve $X/\FF_q$ is {\it supersingular} if the Newton polygon of $L(X/\FF_q, t)$ is a line segment of slope $1/2$ or, equivalently, if 
the Jacobian of $X$ is supersingular.  One thing to note is that a curve $X/\FF_q$ is supersingular if and only if 
$X$ is minimal over $\FF_{q^c}$ for some $c \in \NN$.

Van der Geer and Van der Vlugt proved that there exists a supersingular curve of every genus in characteristic $p=2$ \cite{VdGVdV}.
For $p \geq 3$, it is unknown if there exists a supersingular curve of every genus.
An affirmative answer would follow from a conjecture about deformations of reducible supersingular curves 
\cite[Conjecture 8.5.7]{oort:rend}.
There are many constructions of supersingular curves having arbitrarily large genus.

Recall (from proof of Proposition \ref{Pexists} and remarks after Proposition \ref{Psselliptic}) 
that there exists a (non-simple) 
supersingular principally polarized abelian variety of dimension $g$ over $k$ 
with elliptic rank $e$ if and only if $0 \leq e \leq g-2$ or $e=g$.
In light of this, one can ask the following question.

\begin{question} \label{QssJac}
Given $p$ prime and $g \geq 2$ and $0 \leq s \leq g-2$,
does there exist a smooth curve $X$ over $\overline{\FF}_p$ of genus $g$ whose Jacobian is supersingular and has elliptic rank $e$?
\end{question}

The answer to Question \ref{QssJac} is yes when $g=2,3$ and $e=0$.
To see this, recall from the proof of Lemma \ref{sswiths=0} that a generic supersingular principally polarized abelian variety 
of dimension $g$ has Dieudonn\'e module $\EE/\EE(F^g+V^g)$, which has superspecial rank $s=0$.
When $g=2,3$, such an abelian variety is the Jacobian of a smooth curve with $e=0$.  

One expects the answer to Question \ref{QssJac} is yes when $g=3$ and $e=1$ also. 
To see this, let $E$ be a supersingular elliptic curve.
Let $A$ be a supersingular, non-superspecial abelian surface.
The $3$-dimensional abelian variety $B= A \times E$ is supersingular and has superspecial rank $1$.
If there is a principal polarization on $B$ which is not the product polarization, then 
$B$ is the Jacobian of a smooth curve.

Question \ref{QssJac} is open for $g\geq 4$.

\subsection{Superspecial rank of hyperelliptic curves when $p=2$} \label{Shyp}

In this section, suppose $k$ is an algebraically closed field of characteristic $p=2$.
Application \ref{App1} states that the superspecial rank of a hyperelliptic curve over $k$ with $2$-rank $0$ is either 0 or 1. 
More generally, Application \ref{App1general} states that the superspecial rank of a hyperelliptic curve over $k$ with $2$-rank $r$ 
is bounded by $1+r$.

A hyperelliptic curve $Y$ over $k$
is defined by an Artin-Schreier equation 
\[y^2+y=h(x),\] for some non-constant rational function $h(x) \in k(x)$.
In \cite{EP13}, the authors determine the structure of the Dieudonn\'e module $M$ of 
${\rm Jac}(Y)$ for all hyperelliptic curves $Y$ in characteristic $2$.
A surprising feature is that the isomorphism class of $M$ depends only on the orders of the poles of $h(x)$, 
and not on the location of the poles or otherwise on the coefficients of $h(x)$.

In particular, consider the case that the $2$-rank of $Y$ is $0$, or equivalently, that $h(x)$ has only one pole.
In this case, the Ekedahl-Oort type is $[0,1,1,2,2, \ldots, \lfloor \frac{g}{2} \rfloor]$ \cite[Corollary 5.3]{EP13}.
The $a$-number is $\lceil \frac{g}{2} \rceil$.

\begin{application} \label{App1}
Let $Y$ be a hyperelliptic curve of genus $g$ with $2$-rank $0$ defined over an algebraically closed field of characteristic $2$.
Then the superspecial rank of ${\rm Jac}(Y)$ is $s=1$ if $g \equiv 1 \pmod 3$ and is $s=0$ otherwise.
The elliptic rank of ${\rm Jac}(Y)$ is $e \leq 1$ if $g \equiv 1 \pmod 3$ and $e=0$ otherwise.
\end{application}

\begin{proof}
This follows by applying the algorithm in \cite[Section 5.2]{EP13}.  
Specifically, by \cite[Proposition 5.10]{EP13} (where $c=g$),
the Dieudonn\'e module of the group scheme with Ekedahl-Oort type
$[0,1,1,2,2, \ldots, \lfloor \frac{g}{2} \rfloor]$ is generated by variables $X_j$ for 
$\lceil (g+1)/2 \rceil \leq j \leq g$ subject to the relations $F^{e(j)+1}(X_j)+V^{\epsilon(\iota(j))+1}(X_{\iota(j)})$, 
where the notation is defined in \cite[Notation 5.9]{EP13}.  Then $M_{1,1}$ occurs as a summand 
if and only if there is some $j$ such that $e(j)=\epsilon(j)=0$ and $j=\iota(j)$.  
The condition $e(j)=0$ is equivalent to $j$ being odd.  
The conditions $\epsilon(j)=0$ and $j=\iota(j)$ imply that $2g-2j+1=g-(j-1)/2$ which is possible only if 
$g \equiv 1 \bmod 3$.  If $g \equiv 1 \bmod 3$, then $j=(2g+1)/3$ so the maximal rank of a summand 
isomorphic to $I_{1,1}^s$ is $s=1$.
\end{proof}

\begin{remark}
It is not known exactly which natural numbers $g$ can occur as the genus of a supersingular hyperelliptic curve 
over $\overline{\FF}_2$. 
On one hand, if $g=2^s-1$, then there does not exist a supersingular hyperelliptic curve of genus $g$ over 
$\overline{\FF}_2$ \cite{SZss}.

On the other hand, if $h(x)=xR(x)$ for an additive polynomial $R(x)$ of degree $2^{s}$, then 
$Y$ is supersingular of genus $2^{s-1}$ \cite{VdGVdV92}.
If $s$ is even, then Application \ref{App1} shows that ${\rm Jac}(Y)$ has no elliptic curve factors in 
a decomposition up to isomorphism, 
even though it decomposes completely into elliptic curves up to isogeny. 
\end{remark}

More generally, we now determine the superspecial ranks of hyperelliptic curves in characteristic $2$ having arbitrary $2$-rank.
Consider the divisor of poles 
\[{\rm div}_\infty (h(x)) = \sum_{j=0}^{r} d_j P_j.\] 
By Artin-Schreier theory, one can suppose that $d_j$ is odd for all $j$. 
Then ${\rm Jac}(Y)$ has genus $g$ satisfying $2g+2=\sum_{j=0}^r (d_j+1)$ by the Riemann-Hurwitz formula \cite[IV, Prop.\ 4]{Se:lf}
and has $2$-rank $f=r$ by the Deuring-Shafarevich formula \cite[Theorem 4.2]{Subrao} or \cite[Cor.\ 1.8]{Crew}.
These formulae imply that,
for a given genus $g$ (and $2$-rank $r$), there is another discrete invariant of a hyperelliptic curve $Y/k$, 
namely a partition of $2g+2$ into $r+1$ positive even integers $d_j + 1$.
In \cite{EP13}, the authors prove that the Ekedahl-Oort type of $Y$ depends only on this discrete invariant.

Specifically, consider the variable $x_j:=(x-P_j)^{-1}$, which is the inverse of a uniformizer at the branch point $P_j$ in ${\mathbb P}^1$ 
(with $x_j=x$ if $P_j = \infty$).
Then $h(x)$ has a partial fraction decomposition of the form
\[h(x)=\sum_{j=0}^r h_{j} \big(x_j\big),\]
where $h_j(x) \in k[x]$ is a polynomial of degree $d_j$.
Let $c_j=(d_j-1)/2$ and note that $g=r+\sum_{j=0}^r c_j$.
For $0 \leq j \leq r$, consider the Artin-Schreier $k$-curve $Y_j$ with affine equation $y^2 - y = h_j(x)$.
Let $E_0$ be an ordinary elliptic curve over $k$.

Then \cite[Theorem 1.2]{EP13} states that the de Rham cohomology of $Y$ decomposes, as a module under the actions of Frobenius $F$ and Verschiebung $V$, 
as: 
\[
H^1_{\rm dR}(Y) \simeq  H^1_{\rm dR}(E_0)^{r} \oplus \bigoplus_{j=0}^r H^1_{\rm dR}(Y_j).
\]

Since $E_0$ is ordinary, it has superspecial rank $0$.  
The superspecial rank of ${\rm Jac}(Y)$ is thus the sum of the superspecial ranks of ${\rm Jac}(Y_j)$.
Applying Application \ref{App1} to $\{Y_j\}_{j=0}^r$ proves the following.

\begin{application} \label{App1general} 
Consider a hyperelliptic curve $Y$ defined over an algebraically closed field of characteristic $2$.
Then $Y$ is defined by an equation of the form
$y^2+y=h(x)$ with ${\rm div}_\infty (h(x)) = \sum_{j=0}^{r} d_j P_j$ and $d_j$ odd.  
Recall that $Y$ has genus $g=r+\sum_{j=0}^r c_j$ where $c_j=(d_j-1)/2$ and $p$-rank $r$.
The superspecial rank of ${\rm Jac}(Y)$ equals the number of $j$ such that $c_j \equiv 1 \bmod 3$.
In particular, $s({\rm Jac}(Y)) \leq 1 + r$ and $e(({\rm Jac}(Y))) \leq 1 + 2r$.
\end{application}

\subsection{Hermitian curves} \label{Sherm}

The last examples of the paper are about the superspecial rank for one of the three classes of (supersingular) Deligne-Lusztig curves: the Hermitian curves $X_q$ for $q=p^n$ for an arbitrary prime $p$.
In most cases, the superspecial (and elliptic) ranks are quite small, which is somewhat surprising since these curves are exceptional from many perspectives.

Let $q=p^n$.  The Hermitian curve $X_q$ has affine equation 
\[y^q + y = x^{q+1}.\]
It is supersingular with genus $g=q(q-1)/2$.
It is maximal over $\FF_{q^2}$ because $\#X_q\left(\FF_{q^2}\right)=q^3+1$.
The zeta function of $X_q$ is \[Z(X_q/\FF_q, t)=\frac{(1+qt^2)^g}{(1-t)(1-qt)}.\]


In fact, $X_q$ is the unique curve of this genus which is maximal over $\FF_{q^2}$ \cite{ruckstich}.
This was used to prove that $X_q$ is the Deligne-Lusztig variety for ${\rm Aut}(X_q)={\rm PGU}(3,q)$  \cite[Proposition 3.2]{HansenDL}.


By \cite[Proposition 14.10]{Gross}, the $a$-number of $X_q$ is 
\[a=p^n(p^{n-1}+1)(p-1)/4,\] 
which equals $g$ when $n=1$, equals $g/2$ when $n=2$, and is approximately $g/2$ for $n \geq 3$.
In particular, $X_{p^n}$ is superspecial if and only if $n=1$.

In \cite{PW12}, for all $q=p^n$, the authors determine the Dieudonn\'e module $\dieu_*(X_q) = \dieu_*(\operatorname{Jac}(X_q)[p])$, complementing earlier work in \cite{dum95, dum99}.
In particular, \cite[Theorem 5.13]{PW12} states that the distinct indecomposable factors of Dieudonn\'e module $\dieu_*(X_q)$
are in bijection with orbits of $\ZZ/(2^n+1) -\{0\}$ under $\times 2$.
Each factor's structure is determined by the combinatorics of the orbit, which depends only on $n$ and not on $p$. 
The multiplicities of the factors do depend on $p$.
For example, when $n=2$, the Dieudonn\'e module of $X_{p^2}$ is $M_{2,2}^{g/2}$, which has superspecial rank $0$ (Lemma \ref{lemsshrs0}).
Here is an application of these results.

\begin{application} \label{App2}
The elliptic rank of the Jacobian of the Hermitian curve $X_{p^n}$ equals $0$ if $n$ is even and is at most $(\frac{p(p-1)}{2})^n$ if $n$ is odd.
\end{application}
  
\begin{proof}
By Proposition \ref{Psselliptic}, $e({\rm Jac}(X_{p^n})) \leq s({\rm Jac}(X_{p^n}))$.
Applying \cite[Application 6.1]{PW12}, the factor $\EE/\EE(F+V)$ occurs in the Dieudonn\'e module if and only if 
there is an orbit of length 2 in $\ZZ/(2^n+1)$ under $\times 2$.  
This happens if and only if there is an element of order three in $\ZZ/(2^n+1)$, which is true
if and only if $n$ is odd.  If $n$ is odd, this shows that $\EE/\EE(F+V)$ is not a factor of the Dieudonn\'e module and 
$s({\rm Jac}(X_{p^n}))=0$.
If $n$ is even, the multiplicity of this factor is $s({\rm Jac}(X_{p^n}))=(\frac{p(p-1)}{2})^n$.
\end{proof}

\end{document}